\documentclass[11pt]{article}
\usepackage[margin=1.2in]{geometry}
\usepackage{amsfonts,amsmath,amssymb,amsthm,bm}
\usepackage{pifont}
\usepackage{pgfplots}
\usepackage{subcaption}
\usepackage{mathtools}
\usepackage{graphicx}
\usepackage{authblk}
\usepackage{algorithm}
\usepackage{algpseudocode}
\usepackage{tikz}
\usepackage[
colorlinks,
citecolor=blue,
urlcolor=black,
linkcolor=blue, 
backref=page,
linktocpage=true]{hyperref}
\allowdisplaybreaks 
\numberwithin{equation}{section}

\newtheorem{theorem}{Theorem}[section]
\newtheorem{lemma}[theorem]{Lemma}

\theoremstyle{definition}
\newtheorem{assumption}{Assumption}[section]

\newtheorem{example}{Example}[section]
\theoremstyle{remark}
\newtheorem{remark}{Remark}[section]

\author[1]{Topi Halme}
\author[2]{H. Vincent Poor}
\author[1]{Visa Koivunen}
\affil[1]{Department of Information and Communications Engineering, Aalto University, Finland}
\affil[1]{\texttt{\{topi.halme, visa.koivunen\}@aalto.fi}}
\affil[2]{Electrical and Computer Engineering, Princeton University, USA}

\def\ARL{{\textsc{ARL}}}

\def\Wmax{{W_\text{max}}}

\def\bZ{{\mathbf{Z}}}

\def\bX{{\mathbf{X}}}

\def\calP{{\mathcal{P}}}

\def\bXw{{\bX_{n-1}^{(w)}}}

\def\calN{{\cal N}}
\def\calR{{\cal R}}
\def\calF{{\cal F}}
\def\calC{{\cal C}}
\def\calD{{\cal D}}

\def\Ex{{\mathbb{E}}}

\def\btheta{{\boldsymbol{\theta}}}

\def\lhat{{\hat{\ell}}}

\def\bthetahat{{\skew{2}{\widehat}{\btheta}}}

\def\phat{{\hat{f}}}
\def\phat{{\hat{p}}}

\def\bTheta{{\boldsymbol{\Theta} }}

\def\bbR{{\mathbb{R}}}

\def\calW{{\mathcal{W}}}

\def\bX{{\mathbf{X}}}

\def\bZ{{\mathbf{Z}}}

\newcommand{\ind}[1]{\textbf{1}{\left\{#1\right\}}}

\newcommand\norm[1]{\lVert#1\rVert}
\newcommand\delay[1]{\calD_P\!\left(#1\right)}

\newcommand{\KL}[2]{D(#1 \,\|\, #2)}
\def\define{\coloneq}

\title{Sequential Change Detection using %
 Mixtures of Predictive Distributions}

\graphicspath{{figs/}}
\begin{document}

\maketitle

\begin{abstract}
This paper studies the problem of detecting a change in the distribution of a sequence of independent observations when the post-change distribution is unknown. We propose a novel change detection algorithm, termed Predictive-Mixture CuSum (PM-CuSum), which combines predictive distributions constructed from sliding windows of different lengths within a CuSum recursion. The predictive distributions are aggregated using adaptive weights based on their recent predictive performance. We show that PM-CuSum achieves first-order asymptotic optimality under mild conditions, and that its asymptotic delay bound has a smaller remainder order than what is achieved procedures using a single fixed (even oracle) window. Numerical simulations demonstrate that PM-CuSum performs well compared to existing methods. Moreover, it is demonstrated that forming likelihood ratios using full predictive distributions can substantially improve performance compared to plug-in likelihoods.
\end{abstract}

\section{Introduction}

In the quickest change detection (QCD) problem, the goal is to detect a change in the underlying probability model of a sequence of observations as quickly as possible while controlling a suitable rate of false alarms \cite{TARTAKOVSKY_BOOK,POOR_BOOK}. This problem finds applications in a wide variety of fields, including quality control, sensor networks, environmental monitoring, and cybersecurity. In the classical QCD problem, the pre- and post-change distributions are assumed to be completely known. In this case, the Cumulative Sum (CuSum) procedure introduced by Page \cite{PAGE_1954} exactly minimizes Lorden's worst-case average detection delay subject to a false alarm rate constraint \cite{LORDEN_1971,MOUSTAKIDES_1986}. The Shiryaev-Roberts (SR) procedure \cite{Roberts1966} also has strong optimality properties with respect to various metrics \cite{Polunchenko}.

In most applications, the post-change distribution is not known in advance as it depends on the event causing the change. For example, in a sensor network, the same event may produce different signal intensities at different sensors depending on the event location. When the post-change distribution depends on an unknown (finite-dimensional) parameter, the standard approaches in the literature are generalized likelihood ratio (GLR) \cite{LORDEN_1971,LAI_1998} or mixture-based tests \cite{LAI_1998}. In the former class of procedures, the unknown parameter is essentially replaced by its maximum likelihood estimate, while the latter integrates the parameter over a prior distribution. Although these procedures are effective and conceptually clean, they can be computationally expensive because they generally do not admit a simple recursive update. A recursive update is among the usual desiderata for QCD procedures, since observations can arrive at high rates and decisions must be made online.

Many approaches have been proposed in the literature to lessen the computational burden, see e.g. \cite{wang_review} for a recent review. In some special cases, such as for the detection of a Gaussian mean-shift \cite{Romano2023} or Poisson rate parameter \cite{Ward2025, DeLucia22042026}, the computation of the GLR test statistic can be made efficient. An alternative class of methods utilizes an adaptive estimate of the post-change parameter based on past data to approximate the post-change distribution, and plugs the estimate into the recursive CuSum or SR update. In the sequential analysis literature, the idea appeared already in the work of Robbins and Siegmund \cite{ROBBINS_1974,ROBBINS_1972}, but was introduced to change detection later by Sparks \cite{SPARKS_2000} and Lorden and Pollak \cite{LORDEN_2005}. A computationally efficient adaptive procedure is presented in \cite{LORDEN_2008}, where the estimator is updated recursively but restarted whenever the detection statistic hits zero. Related ideas of restarting were also proposed in \cite{TARTAKOVSKY_2006}. In \cite{CAO_2018}, the estimator is learned using online mirror descent. Recently \cite{XIE_2023} proposed estimating the post-change parameter from a sliding window of fixed size $w$. It was shown that window-limited-CuSum (WL-CuSum) achieves asymptotic optimality with a window size that grows much slower than the window size required in the window-limited GLR test. 

The two central research questions and related design choices for an adaptive QCD procedure are:
\begin{enumerate}
    \item[1)] how much past data to use when estimating or predicting the post-change model? and
    \item[2)] how can that data along with the new observation be converted into a new test-statistic increment?
\end{enumerate}
The typical approaches to item 1) are to either use a fixed window length as in \cite{XIE_2023}, or to keep increasing the window unless the test statistic hits some small value, at which point the window length is reset to one as in \cite{LORDEN_2008,TARTAKOVSKY_2006}. When using a fixed window, the choice of the window size is crucial for the performance of the procedure, and may not always be easy to choose beforehand. Similarly, for the resetting approach, the choice of the reset threshold is important. Regarding item 2), the existing methods have all formed the new likelihood ratio by directly plugging in the parameter estimated using some method, such as maximum likelihood or method of moments \cite{LORDEN_2005,XIE_2023,CAO_2018}.

In this paper, we propose an algorithm which, to the best of our knowledge, is novel in the QCD literature with respect to both points. For item 1), we use a set of window sizes of different lengths, and form the new likelihood ratio as an adaptive mixture of the likelihood ratios corresponding to each window size. The window lengths that have recently achieved the best predictive performance are given the most weight. This way, the algorithm automatically learns the appropriate window size for the problem at hand, reducing the need for tuning. For item 2), instead of forming the new likelihood ratio by plugging in a point estimate of the parameter, we consider full predictive distributions for the new observation based on the past data, and use this distribution to compute the new likelihood ratio. Results from the predictive density estimation literature \cite{brown2008admissible} show that such predictive distributions can dominate plug-in rules under Kullback-Leibler loss in standard models. To the best of our knowledge, this predictive-density approach to constructing test statistic increments has not been previously considered in the QCD literature.

\subsubsection*{Contributions} The main contributions of this paper are the following:
\begin{itemize}
    \item We propose an adaptive QCD algorithm named Predictive-Mixture CuSum (PM-CuSum) that employs a set of candidate predictors based on different window sizes, and combines them using adaptive weights.
    \item We prove that the algorithm is asymptotically first-order optimal in the sense of Lorden's worst-case delay under general conditions, including nonparametric classes of post-change distributions. Moreover, the procedure has a smaller asymptotic remainder term than the popular WL-CuSum procedure \cite{XIE_2023} or its parallel variant. 
    \item The procedure is broadly applicable to both parametric and nonparametric approaches for constructing the predictive distribution, and we provide concrete constructions for some parametric models. 
    \item Numerical simulations demonstrate that the proposed algorithm performs well compared to existing methods, while being computationally efficient. It is also observed that the performance gain obtained by using a full predictive density in place of a plug-in distribution can be significant, even in simple Gaussian settings.
\end{itemize}

The rest of this paper is structured as follows. In Section~\ref{sec:problem_formulation}, we formalize the problem and review the existing approaches. In Section~\ref{sec:pred_cusum}, we describe the proposed algorithm, and analyze its asymptotic properties. In Section~\ref{sec:simulations}, we present numerical simulations comparing the performance of the proposed algorithm to existing methods. Section~\ref{sec:conclusion} concludes the paper and discusses future research.

\subsubsection*{Notation}
We use $\bX_{n-1}^{(w)}$ to denote $(X_{n-w},\dots,X_{n-1})$, i.e. the past $w$ samples observed prior to time $n$. The full history $\bX_{n-1}^{(n-1)}= (X_1,\dots,X_{n-1})$ is denoted by $\bX_{n-1}$ for short. If $X_n$ is a vector, we use $X_n^{(j)}$ to denote the $j$-th component of $X_n$. We write $\calF_n = \sigma(\bX_{n})$ to denote the $\sigma$-algebra generated by the observations up to time step $n$. The expectation operator when change happens at time $\nu$ and the true distribution is $P$ is denoted by $\Ex_\nu^P$, and $\Ex_\infty$ is the expectation when no change happens. 

\section{Problem formulation}\label{sec:problem_formulation}
Let $X_1, X_2, \ldots$ be an independent sequence of random variables observed sequentially. Suppose there exists an unknown deterministic change-point $\nu$ such that
\begin{equation}
  X_n \overset{\mbox{i.i.d}}{\sim} \begin{cases}
~~Q, \quad &n = 1,2,...,\nu -1  \\
~~P, ~~~ &n = \nu, \nu+1,... 
\end{cases}
\end{equation}
i.e. at time $\nu$ the pdf of the observations changes from a known distribution $Q$ to another distribution $P$. It is assumed that $Q$ and $P$ have density functions $q$ and $p$, respectively, with respect to a common dominating measure $\mu$. It is assumed that $Q$ is completely known, while the post-change distribution is unknown. We assume that $P \in \mathcal{P}$ belongs to some known family of distributions. For example, $\calP$ can be a family of parametric distributions parametrized by some vector $\btheta \in \bbR^k$ of length $k$. The goal is to detect the change from $Q$ to $P$ as quickly as possible, while avoiding false alarms. In this setting, a change detection procedure is a stopping time $T$ with respect to the filtration $\{\calF_n\}$, where $\calF_n = \sigma(X_1,\dots,X_n)$ is the $\sigma$-algebra generated by the first $n$ observations. The procedure is evaluated with respect to the following two criteria:

\noindent\textbf{False Alarm Criterion.} The rate of false alarms for a procedure $T$ is quantified by the average run length (ARL) to false alarm in the pre-change regime. The family of tests with ARL at least $\gamma$ is denoted by $\calC_\gamma$, i.e. 
 \begin{equation}
     \calC_\gamma \define \left\{T : \Ex_\infty(T) \geq \gamma\right\}.
 \end{equation}

\noindent\textbf{Delay Criterion.}
The detection delay is measured in the "worst worst-case" as defined by Lorden  \cite{LORDEN_1971}. For $P \in \calP$ 
\begin{equation}\label{lorden}
\delay T \define \underset{\nu\geq 1}{\sup}\text{ ess} \sup \Ex^P_\nu((T-\nu+1)^+ | \mathcal{F}_{\nu-1}),
\end{equation}
with the worst-case evaluated over both all possible change-points $\nu$ and pre-change observations.

If the post-change distribution is known, i.e. $\calP = \{P\}$ is a singleton, QCD theory is well-developed, and some optimal tests are known. Lorden's delay is minimized by the CuSum test \cite{PAGE_1954, MOUSTAKIDES_1986}, which is also computationally convenient. The test is based on thresholding the recursive statistic
\begin{equation}\label{eq:cusum_stat}
    W_n 
    = \max(W_{n-1},0) + \log \frac{p(X_n)}{q(X_n)}, \quad W_0 = 0.
\end{equation}
When $\calP$ is a parametric class depending on an unknown $\btheta$, the post-change density $p$ can be written as $p(\cdot; \btheta)$ to highlight the dependence on $\btheta$.
Adaptive change-point detection methods replace the unknown $\btheta$ by a \emph{predictable}\footnote{A sequence of random variables $\bthetahat_1,\bthetahat_2,...$ is said to be predictable (with respect to a filtration $\{\calF_n\}$) if $\bthetahat_n \in \calF_{n-1}$, i.e. the value of $\bthetahat_n$ is determined by the information available up to time $n-1$.} estimator $\bthetahat_n \in \calF_{n-1}$ which is then inserted as the parameter in the CuSum-update
\begin{equation}\label{eq:general_adaptive1}
    W_n = \max(W_{n-1},0) + \log \frac{p(X_n; \bthetahat_n)}{q(X_n)}.
\end{equation}
Various ways of constructing the estimator $\bthetahat$ have been proposed in the literature. In particular, an important question is identifying the appropriate block of past data from which to estimate the parameter at any given time. Having a very long memory will lead to pre-change data biasing the estimation, while a memory that is too short will yield noisy estimates.

In \cite{LORDEN_2008}, the authors propose to restart the estimator $\bthetahat$ whenever the detection statistic hits zero. In \cite{CAO_2018}, the estimator is learned using online mirror descent. Recently \cite{XIE_2023} proposed estimating $\btheta$ from a sliding window of fixed size $w$. It was shown that asymptotic optimality can be achieved with a window size that grows much slower than the window size required in the window-limited GLR test. After the appropriate estimation window has been selected, relatively few works have studied how to form the new likelihood ratio using the chosen data. The standard approach in the above works has been to plug in the maximum likelihood estimator of $\theta$. The use of shrinkage estimation for $\bthetahat$ was first proposed in \cite{WANG_2015} and recently shown to be a uniform improvement over the MLE for detecting Gaussian mean shifts in \cite{halme2025quickest}.

\subsection{Change detection by prediction}\label{subsec:cd_by_prediction}

More generally, the adaptive change detection tests of the form \eqref{eq:general_adaptive1} may be seen as consecutive predictions of the next observation $X_n$. At time $n$, all available history $\bX_{n-1}$ may be used to formulate a predictive distribution $\phat_n(x)$ that assigns a probability (density) to each possible outcome $x$. The ``quality'' of the prediction is measured by the realized log-likelihood ratio $\lhat_n = \log \phat_n(X_n)/q(X_n)$, where large values indicate that $\phat_n$ predicted the new observation better than the pre-change model $q$. 
The log-likelihood ratio is used to recursively update a test-statistic
\begin{equation}\label{eq:general_adaptive}
    S_n = \max(S_{n-1},0) + \lhat_n = \max(S_{n-1},0) + \log \frac{\phat_n(X_n)}{q(X_n)}
    \end{equation}
In the pre-change regime where $X_n \sim q$, when $\phat_n$ is any probability density function independent of $X_n$, the likelihood update $e^{\lhat_n}$ always has unit expectation, since
\begin{equation}
    \Ex_\infty e^{\lhat_n} = \Ex_\infty\!\!\left(\frac{\phat_n(X_n)}{q(X_n)}\right)=\int \frac{\phat_n(x)}{q(x)}q(x)d x= 1.
\end{equation}
Therefore, being able to consistently obtain large likelihood ratios, and consequently large values of $S_n$, with some sequence of predictors $\phat_n$ implies that the pre-change model is no longer in place. The general approach can be seen to fall under the ``testing by betting'' \cite{shafer_betting} framework that has seen significant interest in recent years. The process $\{\exp(S_n)\}$ can also be seen as an \emph{e-detector}, as defined in \cite{shin_ramdas_rinaldo_2024}. Of particular interest is the expectation of $\lhat_n$ after the change $n\geq\nu$, which defines the rate of growth in $\{S_n\}$ and can be written as 
    \begin{equation}\label{eq:drift_decomposition}
    \Ex_\nu^P(\lhat_n)= 
        \Ex_\nu^P\left(\log \frac{\phat_n(X_n)}{q(X_n)}\right) = 
        \Ex_\nu^P\left(\log \frac{p(X_n)}{q(X_n)}\right) - \Ex_\nu^P\left(\log \frac{p(X_n)}{\phat_n(X_n)}\right).
    \end{equation}
    On the right hand side of~\eqref{eq:drift_decomposition}, the first term is simply the Kullback-Leibler (KL) divergence between the pre- and post-change distributions
\begin{equation}
    \Ex_\nu^P\left(\log \frac{p(X_n)}{q(X_n)}\right)= \KL{p}{q} \define \int\log\left(\frac{p(x)}{q(x)}\right)p(x)dx.
\end{equation}
The second term is the expected KL-divergence 
\begin{equation}
    \mathcal{R}_\nu(\phat_n)\define \Ex_\nu^P[\KL{p}{\phat_n}] = \Ex_\nu^P\left(\log \frac{p(X_n)}{\phat_n(X_n)}\right)
\end{equation}
between the true post-change distribution $p$ and the predicted $\phat_n$, where the expectation is taken over the randomness in both $\phat_n$ and $X_n$. It follows from standard properties of KL-divergence that $\mathcal{R}_\nu(\phat_n)$ is non-negative for $n \geq \nu$ and equal to zero if and only if $\phat_n = p$ almost surely, maximizing $\Ex_1^P(\lhat_n)$. Obviously, inserting $\phat_n = p$ into \eqref{eq:general_adaptive} recovers the optimal CuSum test statistic \eqref{eq:cusum_stat}.

Based on the above, the central question for constructing a change detection test of the form \eqref{eq:general_adaptive} is designing predictors $\phat_n$ such that $\calR_\nu(\phat_n)$ is as small as possible for $n\geq\nu$. If $\{X_n\}$ was an i.i.d. sequence from $P$, the loss $\calR_1(\phat_n)$ or its cumulative version $\sum_{n=1}^N\calR_1(\phat_n)$ is known as KL-loss or (cumulative) predictive regret.
This framework has been extensively studied in sequential prediction,
predictive density estimation, and the minimum description length (MDL)
literatures \cite{Grunwald_MDL_tutorial,brown2008admissible}.
An interesting takeaway from these works is that the plug-in approach, while simple, may be suboptimal. Suppose that $\calP = \{P_\btheta : \btheta \in \bTheta\}$ is a parametric family of distributions, and $\btheta \sim \omega(\btheta)$ for some prior distribution $\omega$ and $\bX_{n-1}$ is an i.i.d. sample from $P_\btheta$. It is well known \cite{AITCHINSON_Goodness} that the Bayesian \emph{posterior predictive distribution} 
\begin{equation}\label{eq:bayes_post}
    \phat_n^\text{bayes}(X_n) = \int_\btheta p(X_n;\btheta)\omega(\btheta|\bX_{n-1})d\btheta
\end{equation}
minimizes $\Ex_{\btheta \sim \omega(\btheta)}[\calR_1(\phat_n)]$ over all predictive distributions $\phat_n$. In \eqref{eq:bayes_post}, $\omega(\btheta | \bX_{n-1})$ denotes the posterior distribution of $\btheta$ given data $\bX_{n-1}$. 

In practical problems, it may not make sense to talk about a distribution $\omega(\btheta)$ that generates $\btheta$. Regardless, there exist ``priors'' $\omega$ such that the resulting posterior predictive distribution \eqref{eq:bayes_post} dominates sensible plug-in predictors
\begin{equation}
    \phat^\text{plug-in}_n(X_n) = p(X_n;\bthetahat_n)
\end{equation} 
under purely frequentist criteria. As an example, if $P_\btheta$ is the standard univariate Gaussian distribution with unknown mean $\btheta \in \bbR$, it was shown that the maximum likelihood predictive plug-in $\phat^\text{plug-in}_n(X_n) = p(X_n;\bthetahat_n^\text{ML})$ is dominated by the posterior predictive distribution $\phat_n^\text{bayes}(X_n)$, computed by assuming an improper uniform prior $\omega(\btheta)\propto 1$ for $\btheta$. That is, for every $\btheta \in \bbR$
\begin{equation}
    \calR_1(\phat_n^\text{bayes}) < \calR_1(\phat^\text{plug-in}_n).
\end{equation}
Here, the plug-in predictive distribution corresponds to $\phat^\text{plug-in}_n(x) = \calN(x;\bthetahat^\text{ML}, 1)$, while the posterior predictive is given by $\phat_n^\text{bayes} = \calN(x; \bthetahat^\text{ML}, 1 + 1/(n-1))$. In particular, $\phat_n^\text{bayes}$ accounts for the uncertainty in the point estimate of the parameter, while this is ignored in the plug-in distribution. In general, constructing admissible or otherwise optimal predictive distributions is intricate even for Gaussian data models. It can be shown that all admissible rules correspond to Bayes posterior predictive distributions \cite{brown2008admissible}.

In the change detection context, the important quantity for effective detection is not $\calR_1(\phat_n)$, but $\calR_\nu(\phat_n)$ for $n \geq \nu$, where the change-point $\nu$ is unknown. That is, the procedure for generating the predictive distribution $\phat_n$ should be effective even when an unknown segment of the prior data $X_1,\dots,X_{n-1}$ is not drawn from $P$, but from the pre-change model $Q$. When computing the predictive density $\phat_n$, using all past data is most likely suboptimal, since it likely contains observations from $Q$. Determining how much past data should be used is itself nontrivial, because the optimal history length depends on the change-point $\nu$, which is precisely the unknown quantity to be detected.

\section{Predictive-Mixture CuSum (PM-CuSum) procedure}\label{sec:pred_cusum}
In this section we propose the Predictive-Mixture CuSum procedure that can effectively construct the predictive distribution $\phat_n$ in a change-point detection context. PM-CuSum maintains a set of candidate predictors $\{\phat^{(w)}_n\}_{w\in\calW}$ indexed by window lengths $w \in \calW$ for generating the predictive distribution $\phat_n$ for each time instance. The defining feature of each candidate predictor $\phat_n^{(w)}$ is that its predictive distribution is constructed based on the past $w$ observations, i.e. 

\begin{equation}
    \phat_n^{(w)}(x) = \phat(x|X_{n-1},\dots,X_{n-w}). 
\end{equation}
At time step $n$, the candidate predictive distributions are combined using weights $\{\pi_n^{(w)}\} _{w\in\calW} \in \calF_{n-1}$ independent of $X_n$ to form the combined predictive distribution
\begin{equation}\label{eq:pm_def}
    \phat_n(X_n) = \sum_{w\in\calW} \pi_{n}^{(w)}\phat^{(w)}_n(X_n), \qquad\qquad \sum_{w\in\calW}\pi_{n}^{(w)} = 1.
\end{equation}
This distribution is then used to compute the new likelihood ratio
\begin{equation}
    \lhat_n = \log \frac{\phat_n(X_n)}{q(X_n)},
\end{equation}
and update the PM-CuSum statistic with
\begin{equation}
    S_n = \max\{S_{n-1}, 0\} + \lhat_n, \quad S_{0} = 0.
\end{equation} 
The procedure declares a change when the statistic crosses a user-defined threshold $b > 0$:
\begin{equation}
    T_b = \inf\{n: S_n > b\}.
\end{equation}

The following lemma states that for any (predictable) choice of predictive densities $\{\phat^{(w)}_n\}$ and weights $\{\pi_n^{(w)}\}$, the average run length to false alarm (ARL) grows exponentially in $b$.
\begin{lemma}\label{thm:arl}
    For any $\{\phat^{(w)}_n\}$ and $\{\pi_n^{(w)}\}$, the ARL of PM-CuSum satisfies: 
    \begin{equation}
        \Ex_\infty T_b  \geq e^b.
    \end{equation}
    Therefore, setting $b = \log \gamma$ is sufficient to guarantee $\ARL(T_b) \geq \gamma$.
\end{lemma}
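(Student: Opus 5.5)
The plan is the standard Shiryaev--Roberts-type martingale argument, adapted to predictable likelihood ratios. Write $\Lambda_n \define e^{\lhat_n} = \phat_n(X_n)/q(X_n)$. Under $\Ex_\infty$ the observations are i.i.d.\ $q$, and $\phat_n$ is a mixture of densities with predictable weights. Hence $\phat_n$ is itself a probability density that is $\calF_{n-1}$-measurable. Then
\begin{equation*}
\Ex_\infty(\Lambda_n \mid \calF_{n-1}) = \int \phat_n(x)\,dx = 1,
\end{equation*}
by the computation already displayed in Section~\ref{subsec:cd_by_prediction}, now applied conditionally on $\calF_{n-1}$. This holds for any choice of $\{\phat^{(w)}_n\}$ and $\{\pi^{(w)}_n\}$, since only predictability and normalization of $\phat_n$ are used.

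Next I define the Shiryaev--Roberts-type statistic
\begin{equation*}
R_n = \sum_{k=1}^{n} \prod_{j=k}^{n} \Lambda_j ,
\end{equation*}
which satisfies the recursion $R_n = (1+R_{n-1})\Lambda_n$ with $R_0=0$. Two facts follow.
\begin{enumerate}
\item[(i)] $\Ex_\infty(R_n \mid \calF_{n-1}) = 1 + R_{n-1}$. So $M_n \define R_n - n$ is a zero-mean $\Ex_\infty$-martingale.
\item[(ii)] The CuSum statistic is dominated: $e^{S_n} \le R_n$. To see this, unroll $S_n = \max\{S_{n-1},0\}+\lhat_n$ to get $S_n = \max_{1\le k\le n}\sum_{j=k}^n \lhat_j$. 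Then $e^{S_n} = \max_k \prod_{j=k}^n \Lambda_j \le \sum_k \prod_{j=k}^n\Lambda_j = R_n$.
\end{enumerate}
Consequently $S_{T_b} > b$ implies $R_{T_b} > e^b$, i.e.\ $R_{T_b} \ge e^b$.

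Finally I apply optional stopping. We may assume $\Ex_\infty T_b<\infty$, otherwise there is nothing to prove. Use the truncated times $T_b\wedge m$. By (i), $\Ex_\infty R_{T_b\wedge m} = \Ex_\infty (T_b\wedge m) \le \Ex_\infty T_b$. Since $R\ge 0$, Fatou's lemma as $m\to\infty$ gives
\begin{equation*}
e^b \le \Ex_\infty R_{T_b} \le \liminf_m \Ex_\infty R_{T_b\wedge m} \le \Ex_\infty T_b .
\end{equation*}
Here $T_b<\infty$ a.s.\ because its expectation is finite. Setting $b=\log\gamma$ then gives $\ARL(T_b)\ge\gamma$.

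The only delicate point is this last step. Passing the martingale identity through an unbounded stopping time needs the truncation-plus-Fatou argument rather than a direct appeal to optional stopping; nonnegativity of $R_n$ is what makes it work. One should also check that $\Lambda_n$ is well defined, i.e.\ $q>0$ $\mu$-a.e.\ on the relevant support, so that the conditional expectation equals exactly $1$ and not merely at most $1$. Either way, the bound $\le 1$ suffices, because then $R_n-n$ is a supermartingale and the same inequalities hold.
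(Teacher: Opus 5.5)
Your proposal is correct and matches the paper's proof step for step: the same Shiryaev--Roberts statistic $(1+V_{n-1})e^{\lhat_n}$, the domination $e^{S_n}\le V_n$, and the zero-mean martingale $V_n-n$ under $\Ex_\infty$. Applying truncation plus Fatou directly to $T_b$ (instead of to an auxiliary time $\tau_b$) is a slightly more careful version of the paper's direct appeal to optional stopping, and unrolling the CuSum recursion supplies the domination claim that the paper only asserts.
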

\begin{proof}
    The proof is given in Appendix~\ref{app:proof_of_arl}
\end{proof}

\begin{remark}

    At the start of the observation process for times $n < w$, an observation history of $w$ samples is not available. A simple workaround is to cap the window length at $n-1$ at time $n$. Alternatively, the final predictive distribution $\phat_n(X_n)$ in \eqref{eq:pm_def} can be formed by taking the sum over only $w < n$, i.e. $\{w \in \calW : w < n\}$. In this paper, we consider the first option, so that technically 
    \begin{equation}
        \phat_n^{(w)}(x) = \phat(x | X_{n-1},..,X_{\max\{1, n-w\}}).
    \end{equation}
    For brevity, this detail is suppressed from the notation in the rest of the paper.
\end{remark}

\subsection{Choosing the weights}
The PM-CuSum algorithm depends on two central design parameters: the candidate predictive distributions $\{\phat^{(w)}_n\}$ and the corresponding weights at each time. The choice of the predictive distributions is deferred to Section~\ref{sec:choose_predictives}. The proposed weighting strategies are general and applicable to any choice of predictive distributions.

At a general time $n = \nu + m$, the ideal window size to use for constructing the predictive distribution would be $m$, since windows longer than $m$ are ``contaminated'' by pre-change data. Of course, $m$ is not known, since $\nu$ is unobserved. Therefore, of the available options $\calW$, some windows $w\in\calW$ may be too long and biased by pre-change data, while others may be unnecessarily short. Moreover, the best choice varies over time: initially after the change, short windows are able to react to the change the quickest and provide a small positive drift for the test statistic, but after a sufficient number of samples from the post-change distribution are available, longer windows are to be preferred. An oracle weighting procedure would then place all its weight at time $n$ on whatever window provides the largest expected drift for the test statistic.

\begin{figure}[tb]
    \centering
    \includegraphics[width=0.7\linewidth]{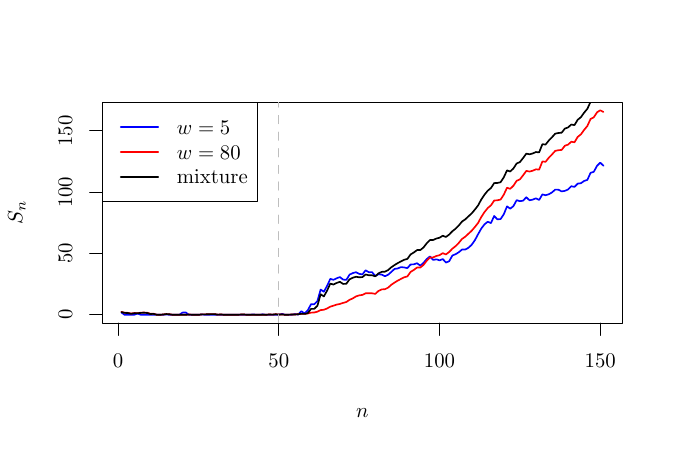}
    \caption{Illustration of the adaptive window weighting used in PM-CuSum. The red and blue lines correspond to adaptive change detection test statistics that utilize long and short windows, respectively. The statistic corresponding to the short window reacts quickly to the change but is eventually surpassed by the longer window. By adaptively combining the two statistics, a statistic larger than either of the individuals can be obtained.}\label{fig:cartoon}

\end{figure}
Figure~\ref{fig:cartoon} illustrates this tradeoff. The short-window statistic reacts quickly after the change at $\nu=50$, but its noisy estimates lead to slower long-run growth. The long-window statistic initially adapts more slowly because its window contains mostly pre-change observations, but it grows faster once the window is dominated by post-change data. The adaptively weighted statistic first tracks the short window and later shifts toward the long window, eventually exceeding both individual statistics.

The problem of choosing the best predictor at a given time may be viewed as an ``expert selection'' problem \cite{Herbester_fixed_share}. Expert-selection methods are typically used in online prediction and learning to track the best-performing strategy in a changing environment \cite{Cesa-Bianchi_Lugosi_2006}, but have not been widely adopted in the QCD literature. In the present setting, the experts are predictive distributions based on different window lengths. Each expert is scored by the predictive likelihood it assigns to the realized observation, $\phat^{(w)}_n(X_n)$. To account for the fact that the best predictor may change over time as discussed above, we adopt the Fixed Share (FS) algorithm \cite{Herbester_fixed_share} for calculating the weights. In the FS procedure, the weight $\pi_n^{(w)}$ of predictor $w$ at time $n$ is updated by
\begin{align}
    \tilde\pi_{n+1}^{(w)} &\propto \pi_n^{(w)}\phat^{(w)}_n(X_n) \\
    \pi_{n+1}^{(w)} &= (1-\alpha)\tilde\pi_{n+1}^{(w)} + \frac{\alpha}{|\calW|},\label{eq:fs_update}
\end{align}
with $\pi_0^{(w)}$ equal to some initial allocation of the weights, typically $1/|\calW|$ for all $w$. In the update, parameter $\alpha$ represents the rate at which we expect the best predictor to change. Intuitively, the algorithm retains a share $\alpha$ of the weight mass that is redistributed uniformly across predictors at each time step, ensuring continued adaptability. This prevents the weight of any predictor from vanishing. Pseudo-code for the proposed procedure is given in Algorithm~\ref{alg:pcusum_fs}.

A useful property of the Fixed Share algorithm is that it controls the ``switching regret'' \cite{adamskiy} relative to any sequence of chosen window lengths with a limited number of switches. That is, if $w_1,...,w_N$ is any sequence of windows in $\calW$ with at most $K-1$ switches, the FS algoritm 
\begin{equation}\label{eq:FS_bound}
    \sum_{n = 1}^N \log \frac{\phat_n^{(w_n)}(X_n)}{\phat_n(X_n)} \leq \log |\calW| + (K-1)\log(|\calW|-1) - (K-1)\log \alpha - (N-K)\log(1-\alpha).
\end{equation}
Equation~\eqref{eq:FS_bound} states that the likelihood assigned to the observations $X_1,...,X_N$ by the predictive distributions chosen using Fixed Share aggregation cannot be too far from the likelihood assigned using any sequence of windows (even the best in hindsight) with $K$ switches. This regret guarantee will play an important role in establishing the asymptotic detection delay properties of the proposed procedure. Relatedly,
recent work has shown that regret bounds are also a central tool in the
analysis of sequential hypothesis tests \cite{waudbysmith2025universallogoptimalitygeneralclasses}.

\begin{remark}
    It will be shown, that using FS to set the weights leads to strong theoretical properties and good empirical performance of the proposed change-detection procedures. The Fixed Share algorithm requires specifying the parameter $\alpha$, although it will be observed that the empirical performance of the algorithm does not seem to depend heavily on $\alpha$. Still, if one wants an adaptive algorithm without any tuning parameters, a simple choice is to maintain individual CuSum statistics $\{S^{(w)}_{n}\}$ for each predictor $w$, and choose the weights for time $n+1$ based on these statistics. For example, a ``follow-the-leader'' strategy would place all weight on ${\arg \max}_w \{S^{(w)}_{n}\}$. This approach would make the combined statistic behave approximately like the running maximum of the individual statistics. In contrast, Fixed Share aggregation can produce a statistic that exceeds each individual statistic by switching between predictors over time; see Figure \ref{fig:cartoon}.
\end{remark}

\begin{algorithm}[H]
\footnotesize
\caption{Predictive-Mixture CuSum}
\label{alg:pcusum_fs}
\begin{algorithmic}[1]
\Require Threshold $b>0$; set of windows $\mathcal W$;
share parameter $\alpha\in[0,1]$.
\State Initialize $S_0 \gets 0$;  initial weights $\pi^{(w)}_{1}\gets1/|\calW|$
\For{$n = 2,3,\ldots$}
    \ForAll{$w\in\mathcal W$}
        \State $w_n \gets \min\{w, n-1\}$
        \State Compute predictive likelihood $\phat^{(w)}_{n}(X_n) = \phat(X_n|X_{n-1},\dots,X_{n-w_n})$ 
    \EndFor
    \State Form aggregated predictive likelihood
        \[
            \phat_n(X_n) \gets \sum_{w\in\mathcal W}\pi^{(w)}_{n}\,\phat^{(w)}_{n}(X_n).
        \]
    \State Update the test statistic
            $$\lhat_n = \log\frac{\phat_n(X_n)}{q(X_n)}$$
        \[
            S_n \gets \max\{S_{n-1} , 0 \} + \lhat_n
        \]
    \If{$S_n > b$}
        \State \Return $T \gets n$ and declare change.
    \EndIf
    \State Fixed Share weight update:
    \ForAll{$w\in\mathcal W$}
        \State Unnormalized update $\tilde\pi^{(w)}_{n+1} \gets \pi^{(w)}_{n}\phat^{(w)}_{n}(X_n)$
    \EndFor
    \State Normalize $\tilde\pi^{(w)}_{n+1} \gets \tilde\pi^{(w)}_{n+1}\big/\sum_{w\in\mathcal W}\tilde\pi^{(w)}_{n+1}$ for all $w$
    \ForAll{$w\in\mathcal W$}
        \State Share step $\pi^{(w)}_{n+1} \gets (1-\alpha)\tilde\pi^{(w)}_{n+1} + \alpha/|\calW|$
    \EndFor
\EndFor
\end{algorithmic}
\end{algorithm}

\subsection{Choosing the predictive distributions}\label{sec:choose_predictives}

The performance of PM-CuSum depends on the predictive densities $\phat_{n}^{(w)}$ which determine the log-likelihood increments and thus the drift of the detection statistic. Thus, for each $\phat_n^{(w)}$, $w \in \calW$, the problem reduces to predictive density estimation based on $w$ samples $\bXw = (X_{n-1},\dots,X_{n-w})$ assumed to be drawn i.i.d. from $P$. As discussed in Sec.~\ref{subsec:cd_by_prediction}, the quality of the predictors is measured through KL-loss $\calR_1(\phat^{(w)}_n)$, which determines the post-change drift of the test-statistic $S_n$. While the proposed procedure can in principle be used in conjunction with any $\phat_{n}^{(w)}$, its KL-loss determines the asymptotic performance of the algorithm. In the following assumption, we require that in the stationary post-change regime, the KL-loss of the predictive distribution must decay polynomially in $w$.

\begin{assumption}\label{assumption:kl_loss} There exist constants $0 < \beta \leq 1$ and $C_1 < \infty$ such that the KL-loss of the predictor $\phat_n^{(w)}$ with window size $w$ satisfies, for $n > w$,
    \begin{equation}\label{assumption:kl_loss_eq}
        \calR_1(\phat^{(w)}_n) = \Ex_1^P\left(\log\frac{p(X_n)}{\phat_n^{(w)}(X_n)}\right) \leq \frac{C_1}{w^\beta}, \quad \text{for all } P \in \calP
    \end{equation}
\end{assumption}

Next, we give examples of predictors that satisfy \eqref{assumption:kl_loss_eq} and the associated $\beta$ in parametric and nonparametric cases. In the parametric case, the two important classes of predictors are Bayesian predictive distributions and the plug-in approach.
\begin{example}[Bayesian predictive densities.]
    Let $\mathcal{P}$ be a $k$-parameter exponential family, and $\omega$ a continuous prior with $\omega(\btheta)>0$ for all $\btheta\in\bTheta$. Then the predictor given by the posterior predictive distributions, defined by
    \begin{equation}\label{eq:bayes_pred}
        \phat_n^{(w)}(X_n) = \int_\bTheta p(X_n;\btheta)\omega(\btheta|\bXw)d\btheta
    \end{equation}
    satisfies Assumption \ref{assumption:kl_loss} with $\beta = 1$ under standard regularity conditions for finite-dimensional exponential families \cite{Grunwald_MDL_tutorial}.
\end{example}
\begin{example}[Plug-in maximum likelihood.]
    In the absence of conjugate priors or closed-form distributions, the full predictive density \eqref{eq:bayes_pred} may be computationally expensive to evaluate. In such cases, the plug-in approach provides a simpler but useful alternative. Let $\bthetahat_{n-1}^{(w)}$ denote an estimator of $\btheta$ based on $\bXw$. Then, the plug-in predictor is given by 
    \begin{equation}
        \phat^{(w)}_n(X_n) = p(X_n; \bthetahat_{n-1}^{(w)}).
    \end{equation}
    In $k$-parameter exponential families, $\phat_n^{(w)}$ satisfies Assumption~\ref{assumption:kl_loss} with $\beta=1$ if $\bthetahat_{n-1}$ is chosen as a (smoothed) maximum likelihood estimator \cite{Grunwald_MDL_tutorial}, for example.  
\end{example}

When $\calP$ is a nonparametric class of distributions, many ways of estimating $P \in \calP$ exist. Kernel density estimators (KDEs) are among the most common such methods. A properly tuned KDE satisfies \eqref{assumption:kl_loss_eq} with a $\beta$ that depends on the dimension and smoothness of the distributions in $\calP$.
\begin{example}[Kernel density estimators (KDE)]
Let $\calP_r$ be the $r$-Hölder density class of probability distributions (see \cite{Liang2024QuickestEstimation}\cite[pp. 5]{tsybakov2008nonparametric} for the exact definition). Intuitively, $\calP_r$ contains distributions with densities that are $\lfloor r \rfloor$ times differentiable and the $\lfloor r \rfloor$-order derivative has bounded local variation.

The KDE is defined as
\begin{equation}
    \phat_n^{(w)}(X_n) = \frac{1}{w\prod_{i=1}^k h^{(i)}}\sum_{j=n-w}^{n-1}\prod_{i=1}^k K\left(\frac{X_n^{(i)}-X_j^{(i)}}{h^{(i)}}\right),
\end{equation}
where $K(\cdot)\geq 0$ is a \emph{kernel function} and $h^{(i)}, 1\leq i \leq k$ are smoothing parameters. If $K$ and $h$ are properly chosen, it is shown in \cite[Lemma 1]{Liang2024QuickestEstimation} that the KDE predictive satisfies \eqref{assumption:kl_loss_eq} such that the KL-loss decreases with exponent 
\begin{equation}
    \beta = \frac{2r}{2r + k} < 1,
\end{equation}
for all $P \in \calP_r$.
\end{example}

Besides the assumption on KL-loss, a technical assumption on the tail-behavior of the likelihood ratio increment $\lhat_n$ is made for the purposes of theoretical analysis.
\begin{assumption}\label{assumption:overshoot_bound}
    It is assumed that there exists a constant $C_2$ such that for all $n$
    \begin{equation}\label{eq:overshoot_eq}
        \sup_{t\geq 0}\Ex_\nu\left( \lhat_n -t  \, \Big| \,   \lhat_n\geq t, \calF_{n-1}  \right) \le C_2 \quad \text{almost surely.}
    \end{equation}
\end{assumption}
Assumption~\ref{assumption:overshoot_bound} is used solely for bounding the overshoot of the test statistic over the stopping threshold. In some contexts, the expectation $\Ex(Z -t | Z\geq t)$ is known as the mean residual life (MRL) of a random variable $Z$. The assumption requires that the MRL of $\lhat_n$ is bounded for all $t$ and is satisfied, for example, if its distribution is log-concave \cite{Barlow}. A similar assumption was made by Wald in his foundational work on sequential analysis \cite{Wald1947SequentialAnalysis}, and recently in e.g. \cite{warner2024worst}. However, we believe that Assumption~\ref{assumption:overshoot_bound} could be weakened without affecting the asymptotic detection delay results of Theorem~\ref{thm:delay_thm} below, see discussion in Appendix~\ref{appendix:overshoot_discussion}.

To obtain the smallest asymptotic growth rate of detection delay, it suffices to consider a logarithmically spaced collection of window lengths. Accordingly, we let
\begin{equation}\label{eq:W_def}
    \calW = \{2^r : r = 1,2,\ldots,\lceil \log_ 2 b\rceil\}
\end{equation}
so that the largest window is of order $b=O(\log \gamma)$ and the total number of windows $|\calW| = O(\log b) = O(\log \log \gamma)$ grows slowly w.r.t $\gamma$.

\begin{theorem}\label{thm:delay_thm}
    If the predictors $\phat_n^{(w)}$ satisfy Assumptions~\ref{assumption:kl_loss} and \ref{assumption:overshoot_bound}, the set of windows is chosen as in \eqref{eq:W_def}, and $\alpha = 1/b$, then detection delay of PM-CuSum as $\gamma \to \infty$ is given by
    \begin{equation}\label{eq:thm_beta1}
        \delay{T_b} = \frac{\log \gamma}{\KL{p}{q}} +O((\log\log \gamma)^2),   \quad \text{if } \beta = 1
    \end{equation} 
    \begin{equation}
        \delay {T_b} = \frac{\log \gamma}{\KL{p}{q}} +O((\log \gamma)^{1-\beta}), \quad \text{if } 0 < \beta < 1 
    \end{equation} 
    for all $P \in \calP$. Therefore, the procedure is first-order asymptotically optimal as $\gamma \to \infty$ when $\beta > 0$.

\end{theorem}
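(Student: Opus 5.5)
The proof has to establish an upper bound on $\delay{T_b}$ with $b=\log\gamma$. By Lemma~\ref{thm:arl} this choice places $T_b$ in $\calC_\gamma$, and the matching lower bound $\log\gamma/\KL{p}{q}\,(1+o(1))$ is Lorden's classical bound, so first-order optimality follows once the upper bound is in hand.

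\textbf{Reduction to a fresh statistic.} Fix $\nu$ and condition on $\calF_{\nu-1}$. Because $S_n \ge S_{\nu-1}^+ + \sum_{j=\nu}^n \lhat_j \ge \sum_{j=\nu}^n \lhat_j$, we have $T_b\le \tau_b:=\inf\{n\ge\nu:\sum_{j=\nu}^n\lhat_j>b\}$. The Fixed Share weights $\pi_\nu$ still depend on the pre-change data, but the FS bound \eqref{eq:FS_bound} can be restarted at time $\nu$. The share step guarantees $\pi_\nu^{(w)}\ge \alpha/|\calW|$, so the bound holds from $\nu$ onward with an extra additive $\log(|\calW|/\alpha)$, uniformly over pre-change histories. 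This uniformity is what controls the ess sup.

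\textbf{Comparator and drift.} For a horizon $N$ of order $b/\KL{p}{q}$, choose the comparator window sequence $w_n = $ the largest $2^r\le n-\nu$ (capped at $\max\calW$). It has $K=O(\log b)$ switches and uses only post-change data. Applying \eqref{eq:FS_bound} with $\alpha=1/b$ and $|\calW|=O(\log b)$ gives, pathwise,
\[
\sum_{j=\nu}^{n}\lhat_j \ \ge\ \sum_{j=\nu}^{n}\log\frac{\phat_j^{(w_j)}(X_j)}{q(X_j)} - R_b,
\qquad R_b = O\bigl(K\log(b|\calW|)+ N/b\bigr)=O\bigl((\log b)^2\bigr).
\]
Taking expectations with Assumption~\ref{assumption:kl_loss}, the comparator sum has mean at least $(n-\nu+1)\KL{p}{q}-\sum_j C_1 w_j^{-\beta}$. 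Since $w_j\asymp j-\nu$, this sum is $O(\log N)=O(\log b)$ when $\beta=1$ and $O(N^{1-\beta})=O(b^{1-\beta})$ when $\beta<1$. Beyond horizon $\max\calW\approx b$ the window is capped, which costs $C_1 b^{-\beta}$ per step and contributes $O(N b^{-\beta})$, a term of the same order. The total deficit $D_b$ is therefore $O((\log b)^2)$ or $O(b^{1-\beta})$, which are exactly the stated remainder orders.

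\textbf{Wald-type identity.} Let $M_n=\sum_{j=\nu}^n(\lhat^{c}_j-\Ex[\lhat^c_j\mid\calF_{j-1}])$, where $\lhat^c_j$ is the comparator's log-likelihood ratio. This is a martingale, so optional stopping at $\tau_b\wedge m$ gives
\[
\KL{p}{q}\,\Ex(\tau_b-\nu+1)\le b+R_b+D_b'+\Ex[\text{overshoot}].
\]
Assumption~\ref{assumption:overshoot_bound} bounds the overshoot by $C_2$ through a Lorden-type argument. Dividing by $\KL{p}{q}$ yields $\Ex(\tau_b-\nu+1)\le b/\KL{p}{q}+O(\text{remainder})$.

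\textbf{Main obstacle.} The delicate step is combining the pathwise regret bound with the stopping identity. The deficit has to be expressed through conditional KL-losses that are summable along the stopped path, so Assumption~\ref{assumption:kl_loss} must be applicable conditionally given $\calF_{j-1}$; here the windows contain only post-change data and are independent of the pre-change history. The comparator's cap and the FS term $N/b$ are both linear in $N$, so the argument must also control them on the event $\tau_b>N$. I would handle this by truncating at $m=2b/\KL{p}{q}$ and showing $\Pr(\tau_b>m)$ decays fast enough via a Chernoff or Azuma bound on $M_n$, which requires some integrability of $\lhat^c_j$.
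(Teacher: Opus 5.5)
Your outline follows the paper's architecture: a fresh sum after $\nu$, Fixed Share regret restarted via the floor $\alpha/|\calW|$, the dyadic clean-window comparator, the deficit $\sum_j C_1\tilde w(j-\nu)^{-\beta}$, and the overshoot bound from Assumption~\ref{assumption:overshoot_bound}. However, the step you flag as the main obstacle is not resolved, and the resolution you sketch does not work.

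Assumption~\ref{assumption:kl_loss} cannot be applied ``conditionally given $\calF_{j-1}$''. It bounds only the unconditional mean $\calR_1(\phat_j^{(w)})=\Ex\,\KL{p}{\phat_j^{(w)}}$. The conditional loss $\KL{p}{\phat_j^{(w_j)}}$ is a random function of the window $X_{j-w_j},\dots,X_{j-1}$, and the event $\{\tau_b\ge j\}$ depends on the same observations. The two are correlated: an unlucky window slows the statistic and postpones stopping. So you cannot replace the stopped sum of conditional losses by $\Ex\sum_{j\le\tau_b}C_1w_j^{-\beta}$. Independence of the windows from the pre-change history is not the relevant issue.

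The missing idea is the lag decoupling the paper borrows from the WL-CuSum analysis. Every window has length at most $\Wmax$, so the comparator's loss at time $j$ is independent of $\calF_{j-\Wmax-1}$, while $\{\tau_b\ge j-\Wmax\}\in\calF_{j-\Wmax-1}$. Using $\ind{\tau_b\ge j}\le\ind{\tau_b\ge j-\Wmax}$ and nonnegativity of KL (all expectations conditional on $\calF_{\nu-1}$),
\[
\Ex\sum_{j=\nu+1}^{\tau_b}\KL{p}{\phat_j^{(\tilde w(j-\nu))}}
\le\Ex\sum_{j=\nu+1}^{\tau_b+\Wmax}\frac{C_1}{\tilde w(j-\nu)^{\beta}}
\le C_1\,\zeta(b,\beta)+\frac{C_1}{b^{\beta}}\,\Ex(\tau_b-\nu+1).
\]
The $\Wmax$ extra terms are therefore absorbed into the stated remainder.

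Your truncation at $m=2b/\KL{p}{q}$ with a Chernoff or Azuma bound is both unavailable and unnecessary. Azuma needs bounded increments and Chernoff needs exponential moments. Nothing in Assumptions~\ref{assumption:kl_loss} and~\ref{assumption:overshoot_bound} controls the lower tail of $\lhat_j$, which is what $\Pr(\tau_b>m)$ depends on. Moreover, to preserve an additive remainder the tail bound would have to be quantitative, roughly $\Pr(\tau_b>m)=O\bigl((\log b)^2/b\bigr)$ for $\beta=1$, uniformly over pre-change histories. Truncation would incidentally sidestep the correlation problem, since by nonnegativity the stopped sum up to $\tau_b\wedge m$ is dominated by the deterministic sum up to $m$. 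But it only does so by moving all the difficulty into this tail bound.

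The paper needs no horizon. The Fixed Share bound is pathwise, so it can be evaluated at the random time and gives $(\tau_b-\nu+1)/b+O((\log b)^2)$. Combined with the decoupled bound above, Wald's identity for the i.i.d.\ post-change $\log(p/q)$ yields, for $x=\Ex(\tau_b-\nu+1\mid\calF_{\nu-1})$, the self-bounding inequality
\[
x\le b/\KL{p}{q}+C\bigl(x\,b^{-\beta}+(\log b)^2+\zeta(b,\beta)\bigr),
\]
which is simply solved for $x$.

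A minor point: start the fresh sum only once the comparator window is clean, dropping the first increments using $S^+\ge 0$ as the paper drops $\lhat_\nu$. At $j=\nu$ your comparator is undefined, and every predictor is built from adversarial pre-change data whose loss nothing controls.
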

\begin{proof}
    The proof is given in Appendix~\ref{app:proof_of_delay}
\end{proof}

In the change-detection literature, first-order optimality is usually given in the form $\delay T = \log\gamma/\KL{p}{q}(1+o(1))$. Theorem~\ref{thm:delay_thm} states that for the proposed algorithm, the $o(1)$ term goes to zero at a particularly fast rate. In particular, the delay expressions can be rewritten as
\begin{equation}\label{eq:add_remainder_1}
    \delay{T_b} = \frac{\log\gamma}{\KL{p}{q}}\left(1 + O \left(\frac{(\log\log\gamma)^2}{\log\gamma}\right)\right) \quad \text{if } \beta = 1
\end{equation}
\begin{equation}
    \delay{T_b} = \frac{\log\gamma}{\KL{p}{q}}\left(1 + O \left((\log \gamma)^{-\beta}\right)\right) \quad \text{if } 0 < \beta < 1.
\end{equation}
When $\beta = 1$, the WL-CuSum procedure \cite{XIE_2023} achieves a remainder term of $O((\log\gamma)^{-1/2})$ in \eqref{eq:add_remainder_1}, larger than what is achieved by the proposed method. When $\beta < 1$, the remainder term with a single window has been shown to be of order $O((\log\gamma)^{-\rho})$, where $\rho = \max_{\kappa \in [0,1]}\min(\kappa\beta, 1-\beta) < \beta$ \cite{Liang2024QuickestEstimation}, which is again larger than the remainder term achieved by the proposed method. Thus, adaptively combining predictors of different window lengths leads to better asymptotic performance than what can be achieved with any single fixed window size. Moreover, the number of windows required by the proposed algorithm is only $O(\log\log\gamma)$.

In $k$-parameter exponential families, the minimax optimal second-order term in \eqref{eq:thm_beta1} is known to equal $C\log\log\gamma$ \cite{TARTAKOVSKY_BOOK}, and can be achieved (including the optimal constant) by some algorithms, such as \cite{LORDEN_2005} and \cite{CAO_2018}. However, these algorithms are computationally expensive, essentially starting a new adaptive test at each time instance such that the computational complexity grows linearly in time. For single-parameter exponential families, the restart-based estimation procedure of \cite{LORDEN_2008} achieves the optimal second-order term in terms of Pollak's delay. While it may be possible to extend the result to e.g. multi-parameter exponential families, giving an upper bound with respect to Lorden's worst-worst-case delay, as done in this paper, is likely more challenging. This is because it is hard to rule out the possibility of worst-case pre-change data causing the estimator to take unfavorable values at the time the change happens.

\subsection{Setting the Fixed Share parameter}

The proposed algorithm involves setting a parameter $\alpha$ that determines how much of the total weight mass is redistributed uniformly among the windows at each time. In an informal sense, $\alpha$ defines the length of the ``memory'' of the weight assignment procedure. In the theoretical analysis, we considered a fixed value of $\alpha$ (that vanishes as $\gamma \to \infty$), but for practical use it may be useful to let the $\alpha$ in \eqref{eq:fs_update} depend on $n$. 

Intuitively, when the change has not occurred, the algorithm should be able to quickly adapt to any window size being effective. Before the change, no window length is systematically better for detection (since there is nothing to detect), so premature concentration of the weights is undesirable. Conversely, when the change has happened, the procedure should eventually ``zoom in'' on the window lengths $w$ where $\phat_n^{(w)}$ has recently been the largest. A natural measure of evidence of the change having happened is the current value of the test statistic $S_n$. One simple way of setting $\alpha_{n+1}$ given $S_n$ is to use a logistic function transform
\begin{equation}\label{eq:alpha_eq}
    \alpha_{n+1} = 1- \frac{1}{1+e^{-S_n^+}} = \frac{1}{1+e^{S_n^+}} \in (0, 1/2],
\end{equation}
where $S_n^+ = \max\{0, S_n\}$.
When $S_n$ is small, $\alpha_{n+1}$ is large and the fixed share update can quickly adjust the weights between windows based on new data. When $S_n$ is larger, indicating that a change may have occurred, $\alpha_{n+1}$ is reduced, meaning that less of the weights are redistributed, and more is placed on windows that have recently made good predictions (that caused $S_n$ to grow in the first place). 

\begin{figure}[htpb]
    \centering
    \includegraphics[width=0.6\linewidth]{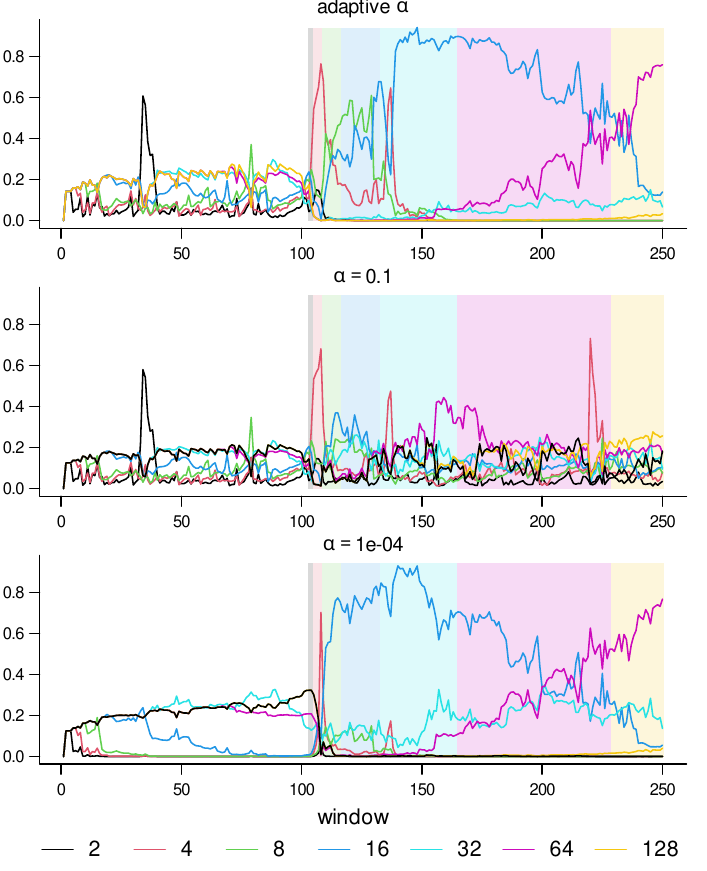}
    \caption{The weights assigned to different windows by the PM-CuSum procedure with different specifications of $\alpha$ as a function of time on a sample run. The background color indicates the (unknown) most effective window length at a given time. An adaptive choice of $\alpha$ keeps the weights roughly equally distributed before the change at time $n=100$, and places weights on the currently effective window sizes after the change.}\label{fig:alpha_fig}
\end{figure}

The effect of the choice $\alpha$ is illustrated in Figure~\ref{fig:alpha_fig}. Three versions of the PM-CuSum algorithm with different $\alpha$ are applied to the same set of data where a change happens at time $n = 100$. In the plots, the lines represent the weights $\pi_n^{(w)}$ assigned to each window $w \in \calW$ as a function of time. In the top plot, $\alpha$ is set adaptively using $\eqref{eq:alpha_eq}$, in the middle plot $\alpha$ is set to a large fixed value of $\alpha = 0.1$, and in the bottom plot $\alpha = 10^{-4}$ is small. The set of windows is $\calW = \{2,4,8,16,32,64,128\}$. The other simulation parameters are as in Sec.~\ref{subsec:parallel_compare}, though the details are inconsequential for the purposes of this illustration. The colored background from time $n=100$ onward represents the ``optimal'' window at any given time; the largest $w \in \calW$ such that $n-100 \leq w$. For instance, between $n = 132$ and $n = 164$ the ``optimal'' choice of the options in $\calW$ would be $w = 32$ (indicated by the light-blue color in the plots), as it is the largest window that is not biased by pre-change data\footnote{The term ``optimal'' is used informally here, since the best window in terms of $\mathcal{R}_\nu(\phat_n^{(w)})$ would depend on the severity of the bias caused by the pre-change data on the predictive distribution. For example, at time $n = \nu + 63$, it is likely that $\phat_n^{(w)}$ with $w=64$ generates better predictions than $w=32$, since the bias from the single pre-change observation is likely negligible.}. 

In the bottom plot of Figure~\ref{fig:alpha_fig}, where $\alpha$ is small, the weights start to concentrate on the larger window lengths in the pre-change regime. This is because the long windows make good predictions of new data in a stationary setting. Of course, having most of the weight concentrated on the long windows when the change happens is suboptimal. In the middle plot, where $\alpha$ is large, the weights remain relatively equally distributed in the pre-change regime. Unfortunately, the large $\alpha$ also means that the weights may not be effectively placed on the best values of $w \in \calW$ in the post-change regime. The time-varying choice of $\alpha$, displayed in the top plot, causes the weights to behave as desired in both the pre- and post-change regimes. Before the change, the weights remain dispersed, but concentrate on the effective values of $w$ after the change.

\section{Performance evaluation with simulation}\label{sec:simulations}

\subsection{Comparison to the parallel WL-CuSum}\label{subsec:parallel_compare}

We first isolate the effect of adaptive window aggregation by comparing PM-CuSum with the parallel WL-CuSum procedure of \cite{XIE_2023}. Since the ``optimal'' window size of WL-CuSum is generally unknown in practice, the parallel WL-CuSum procedure works by running multiple WL-CuSum procedures in parallel, each with a different window size, and stopping when any of the individual procedures stops. This approach was empirically shown to be effective in \cite{XIE_2023}. 

We consider a scenario where pre-change distribution $q = \calN(\bm 0, I_k)$, post-change distribution $p = \calN(\btheta, I_k)$ with $\norm{\btheta} = 1$, for dimension $k \in \{5, 100\}$. The case $k=100$ is a more difficult detection problem than $k = 5$, as the SNR is lower. For a fair comparison, both the PM-CuSum and parallel WL-CuSum utilize the MLE plug-in distribution, given by
\begin{align}\label{eq:gaus_mle_plug}
    \phat_n^{(w)}(X_n) &= \calN(X_n; \bthetahat^{(w)}_n, I_k) \\
    \bthetahat^{(w)}_n &= \frac{1}{w}\sum_{j=n-w}^{n-1}X_j.
\end{align}

\begin{figure}[htpb]
    \centering
    \begin{subfigure}[b]{0.49\linewidth}
        \centering
        \includegraphics[width=\linewidth]{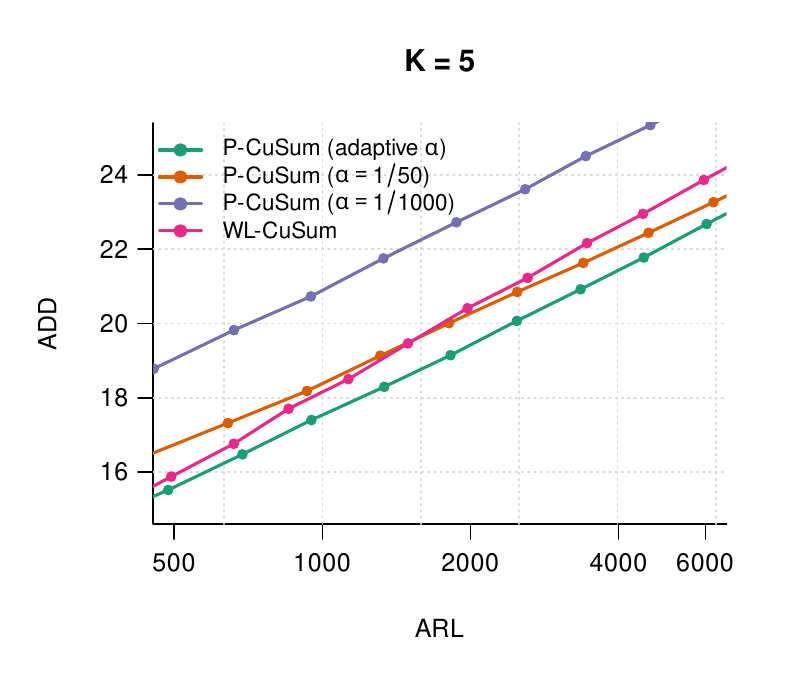}
        \caption{k = 5}
        \label{fig:predictive_vs_parallel_1}
    \end{subfigure}
    \hfill
    \begin{subfigure}[b]{0.49\linewidth}
        \centering
        \includegraphics[width=\linewidth]{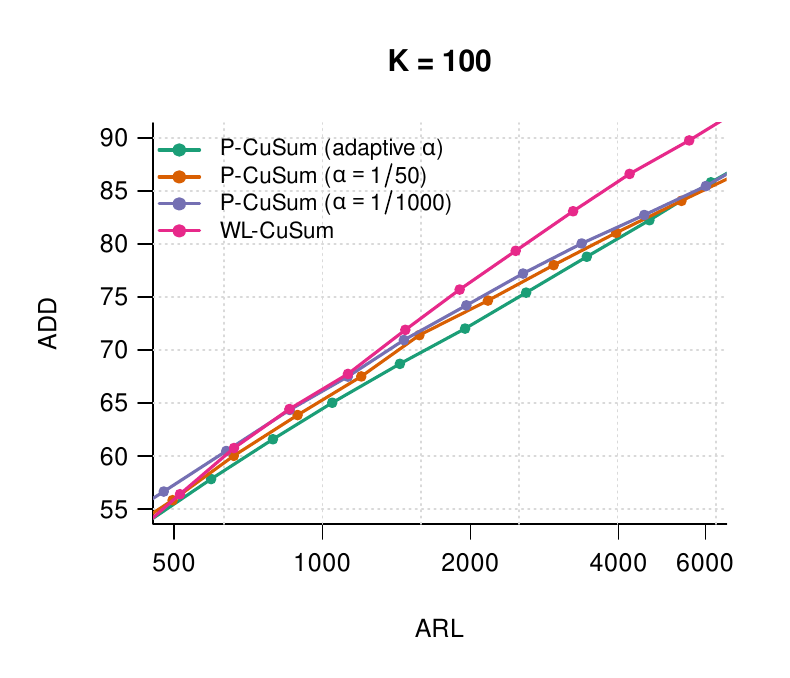}
        \caption{k = 100}
        \label{fig:predictive_vs_parallel_2}
    \end{subfigure}
    \caption{Average detection delay versus ARL for PM-CuSum and parallel WL-CuSum for different number of data streams $k$. The average detection delay of PM-CuSum with a time-varying choice of $\alpha$ is the smallest for all ARL levels for both $k$.}
    \label{fig:predictive_vs_parallel}
\end{figure}

For both algorithms, the set of window sizes is set as $\calW = \{2^r : 1 \leq r \leq 7\}$. Simulations were also run with other sets of windows, and the relative performance of the algorithms was similar. We consider three versions of the  PM-CuSum procedure, one with weight-reallocation parameter $\alpha = 1/50$, one with $\alpha = 1/1000$, and one where the parameter $\alpha$ is set adaptively as in \eqref{eq:alpha_eq}. The change-point is set to $\nu = 100$. The observed empirical detection delays of the procedures for a range of ARL levels are displayed in Figure~\ref{fig:predictive_vs_parallel}. The detection delay of PM-CuSum with a time-varying $\alpha$ is the smallest across all levels of ARL and for both values of $k$. A fixed $\alpha$-parameter that is too large can lead to large detection delays compared to other algorithms when the change is relatively easy to detect, as seen from Fig.~\ref{fig:predictive_vs_parallel_1}. When the detection delay is longer in Fig.~\ref{fig:predictive_vs_parallel_2}, all variants of the PM-CuSum have smaller detection delay than WL-CuSum with the same set of windows.

\subsection{Sparsity-adaptive detection of Gaussian mean shifts}\label{sec:gaus_shift}

In this section, we examine the effect of using full predictive distributions in place of plug-in method \eqref{eq:gaus_mle_plug}, and evaluate the performance against state-of-the-art algorithms. Before the change, the observations $X_n\in \bbR^k$ are assumed to follow $\calN(\bm 0, \sigma^2I_k)$, and $\calN(\btheta, \sigma^2I_k)$ after the change for some unknown mean vector $\btheta \in \bbR^k$. This problem is well-studied in the literature, and especially the setting where $\btheta$ is a sparse vector, i.e. only an unknown subset of the streams experiences the change, has attracted attention. In \cite{XIE_2013} and \cite{chan2017optimal}, the GLR-CuSum test is modified to improve its performance when $\btheta$ contains only a small fraction of non-zero components. Conversely, in the case of dense $\btheta$, it has been observed in \cite{halme2025quickest, WANG_2015}, that shrinkage estimation can be used to effectively reduce detection delay. Recently, \cite{chen2022high} proposed an algorithm that works well for both sparse and dense $\btheta$.

For PM-CuSum to adapt to both sparse and dense changes, we use two predictor families $\{\phat_n^{(\text{s},w)}\}_{w \in \calW}$ and $\{\phat_n^{(\text{d},w)}\}_{w \in \calW}$ (short for ``sparse'' and ``dense'', respectively).
\subsubsection*{Predictor for dense changes}

The predictor for dense changes is formed by setting an independent Gaussian prior distribution 
\begin{equation}\label{eq:gaus_prior}
    \omega(\theta^{(j)}) = \calN(\btheta^{(j)};\mu_0,\tau_0^2), \quad \text{for all } j = 1,...,k
\end{equation}
on the components of $\btheta$. Based on a sample $\bXw$, and denoting the within-window sample mean of the $j$th component by $\bar X_w^{(j)} = (1/w)\sum_{i=n-w}^{n-1}X_i^{(j)}$, the dense predictive distribution is then
\begin{align}
    \phat_n^{(\text{d},w)}(X_n) &= \int p(X_n;\btheta)\omega(\btheta|\bXw)d\btheta \\
    &=\prod_{j=1}^k \calN(X_n^{(j)}; \mu_n^{(j)}, \sigma_n^2 + \sigma^2),
\end{align}
where 
\begin{align}
    \sigma^2_n &= \left(\frac{w}{\sigma^2} + \frac{1}{\tau_0^2}\right)^{-1}\label{eq:gaus_pred_var} \\
    \mu_n^{(j)} &= \sigma_n^2\left(\frac{\mu_0}{\tau_0^2} + \frac{w\bar X^{(j)}}{\sigma^2}\right).\label{eq:gaus_pred_mean}
\end{align}
The hyperparameters $\mu_0$ and $\tau^2_0$ in \eqref{eq:gaus_prior} can be estimated in an empirical Bayes fashion using maximum marginal likelihood, so that
\begin{align}
    (\hat\mu_0, \hat\tau^2) = \underset{{\mu_0, \tau^2_0}}{\arg \max} \int p(\bXw; \btheta)\omega(\btheta|\mu_0, \tau^2)d\btheta.
\end{align}
Routine computations show that the maximizers are
\begin{align}
    \hat\mu_0 &= \frac{1}{k}\sum_{j=1}^k\bar X^{(j)} \\
    \hat\tau^2_0 &= \max\left\{0, \frac1k\sum_{j=1}^k(\bar X^{(j)}-\hat\mu_0)^2 - \frac{\sigma^2}{w}\right\},
\end{align}
which can be inserted into \eqref{eq:gaus_pred_var}--\eqref{eq:gaus_pred_mean}.

\subsubsection*{Predictor for sparse changes}

For prediction when $\btheta$ may be sparse, we use a predictive distribution from coordinatewise spike-and-slab model. Predictive distributions for Gaussian data constructed from spike-and-slab priors were recently studied in \cite{rockova2024adaptivebayesianpredictiveinference} and shown to have strong theoretical properties in terms of KL-loss.  The results of \cite{rockova2024adaptivebayesianpredictiveinference} are stated in terms of Bayesian predictive densities and their KL-risk, but closed-form expressions for the predictive likelihoods with a Laplace-slab used below are not provided. We therefore derive these expressions explicitly, yielding a computationally tractable sparse predictive density that can be evaluated online and combined with the dense predictor within PM-CuSum.

Let \(A^{(j)} \in \{0,1\}\) indicate whether coordinate \(j\) is affected by the change. We assume
\begin{equation}
A^{(j)} \sim \operatorname{Bernoulli}(\eta),
\end{equation}
independently over \(j=1,\ldots,k\). Conditional on \(A^{(j)}\), the post-change mean component satisfies
\begin{equation}
\theta^{(j)} =
\begin{cases}
0, & A^{(j)}=0,\\
\vartheta^{(j)}, & A^{(j)}=1,
\end{cases}
\qquad
\vartheta^{(j)} \sim g_\lambda .
\end{equation}
Here \(\eta\in[0,1]\) is the prior probability that a coordinate is affected, and \(g_\lambda\) is a slab density controlling the magnitude of the nonzero components. In the literature on sparse Gaussian estimation, it has been demonstrated that the slab-distribution $g_\lambda$ needs to have heavy enough tails to effectively identify the few but possibly large non-zero components of $\btheta$ \cite{Castillo_haystack}. We use a Laplace slab
\begin{equation}\label{eq:lapl_dist}
g_\lambda(\vartheta)=\frac{\lambda}{2}\exp(-\lambda|\vartheta|).
\end{equation}
For a fixed window length \(w\), define
\begin{equation}
\bar X^{(j)}
=
\frac{1}{w}\sum_{i=n-w}^{n-1}X_i^{(j)}
\end{equation}
as the sample mean within the window. Throughout this subsection, the dependence of various quantities on the window length $w$ is suppressed for notational simplicity.

The marginal densities of $\bar X^{(j)}$ when $A^{(j)} = 0$ and $A^{(j)}=1$ are given, respectively, by
\begin{equation}
m_{0}(\bar X^{(j)}) =p(\bar X^{(j)} |A^{(j)}=0) = \calN(\bar X^{(j)};0,\sigma^2/w),
\end{equation}
and
\begin{equation}\label{eq:m_dense}
m_{1}(\bar X^{(j)}) =p(\bar X^{(j)} |A^{(j)}=1)=\int \calN(\bar X^{(j)};\vartheta,\sigma^2/w)g_\lambda(\vartheta)\,d\vartheta.
\end{equation}
Then the posterior probability that coordinate \(j\) is affected is
\begin{equation}\label{eq:post_prob}
\rho(z)=\mathbb P(A^{(j)}=1\mid \bar X^{(j)}=z)
=
\frac{\eta m_{1}(z)}
{(1-\eta)m_{0}(z)+\eta m_{1}(z)}.
\end{equation}
Conditional on \(A^{(j)}=1\), the posterior density of the mean $\theta^{(j)}$ is
\begin{equation}
p(\theta^{(j)} = \vartheta\mid \bar X^{(j)} = z, A^{(j)} = 1)
=
\frac{\calN(z;\vartheta,\sigma^2/w)g_\lambda(\vartheta)}
{m_{1}(z)}.
\end{equation}
The corresponding posterior predictive density for the new observation $X_n^{(j)} = x$ is then
\begin{equation}\label{eq:u_def}
u(x\mid z) := p(X_n^{(j)}=x|\bar X^{(j)}=z, A^{(j)}=1)= \int \calN(x;\vartheta,\sigma^2)\frac{\calN(z;\vartheta,\sigma^2/w)g_\lambda(\vartheta)}
{m_{1}(z)}d\vartheta
\end{equation}
Therefore, the coordinatewise sparse predictive density for $X^{(j)}_n = x$ given $\bar X^{(j)} = z$ is
\begin{equation}
\hat p_{s,w}(x\mid z)
=
(1-\rho(z))\calN(x;0,\sigma^2)+\rho(z)u(x\mid z).
\end{equation}
Closed-form expressions for $u(x\mid z)$ and $\rho(z)$ are derived in Appendix~\ref{appendix:gauss_shift_predictives}.
Since the coordinates are conditionally independent, the full sparse predictive density is
\begin{equation}
\hat p_n^{(s,w)}(X_n)
=
\prod_{j=1}^k
\hat p_{s,w}\!\left(X_n^{(j)}\mid \bar X^{(j)}\right).
\end{equation}
The hyperparameter $\eta$ can again be set in an empirical Bayes fashion by maximizing the marginal likelihood of $\bXw$, that is,
\begin{equation}
    \hat\eta = \underset{\eta \in [0,1]}{\arg \max} \prod_{j=1}^k\left[(1-\eta)m_0(\bar X^{(j)}) + \eta \, m_1(\bar X^{(j)})\right].
\end{equation} 
In Appendix~\ref{appendix:gauss_shift_predictives}, an efficient method for computing $\hat{\eta}$ is presented. The Laplace hyperparameter $\lambda$ could also be set using maximum marginal likelihood. However, in the simulations we set a fixed value $\lambda = 0.5$.

\subsubsection*{Combining the predictors}
Combining the sparse and dense predictors to form the final predictive $\phat_n(X_n)$ used in the PM-CuSum test is immediate within the proposed weighting approach. If $\calW$ denotes the set of windows used with both predictors, we require a total of $2|\calW|$ weights $\{\{\pi_n^{(\text{d}, w)}\}_{w\in\calW},\{\pi_n^{(\text{s}, w)}\}_{w\in\calW}\}$ such that 
\begin{equation}
    \sum_{w \in \calW} \pi_n^{(\text{d}, w)} + \sum_{w \in \calW} \pi_n^{(\text{s}, w)} = 1
\end{equation}
and
\begin{equation}
    \phat_n(X_n) =\sum_{w\in\calW}\pi_n^{(\text{d}, w)}\phat_n^{(\text{d},w)}(X_n) + \sum_{w\in\calW}\pi_n^{(\text{s}, w)}\phat_n^{(\text{s},w)}(X_n).
\end{equation}

\subsubsection{Simulation results}

The performance of PM-CuSum is compared with other algorithms proposed in the literature for high-dimensional Gaussian mean change. To evaluate the performance of the methods in both sparse and dense settings, we set the true parameter $\btheta \in \bbR^k$ using the following process on each Monte Carlo run. For a given number of affected sensors $s \in \{1,\dots,k\}$, we first select the set $\mathcal{S}$ of $s$ affected components randomly. Then we sample $\btheta = \bZ/\norm{\bZ}_2$, where $\bZ = (Z^{(1)},\dots,Z^{(k)})$ has independent components satisfying $Z^{(j)} \sim \calN(0,1)_{\ind{j\in \mathcal{S}}}$. Hence, $\norm{\btheta}_2 =1$ by construction. A similar setup was considered in \cite{chen2022high}. We set $k = 100, \sigma^2=1$ and consider values of $s \in \{1,5,10,20,50,100\}$.

The proposed PM-CuSum algorithm is run using the two predictors as described in Section~\ref{sec:gaus_shift}. Additionally, we implement a ``naive'' version of the algorithm that utilizes only the plug-in MLE predictive distribution \eqref{eq:gaus_mle_plug}. The set of window sizes is chosen as $\calW = \{2^j : 1\leq j\leq 7\}$. The procedure is compared with three methods designed specifically for Gaussian mean change detection, namely those proposed by Xie and Siegmund (XS) \cite{XIE_2013}, Chan \cite{chan2017optimal} and the OCD procedure \cite{chen2022high}. For these algorithms, we utilize their implementations in the \textbf{ocd} R package \cite{ocd_package}. Additionally, we benchmark against the windowed GLR-CuSum \cite{LAI_1998} procedure. The XS and Chan algorithms require a tuning parameter $p_0$ that represents the (hypothesized) proportion of streams that undergo a change. This parameter is set to a default value $1/\sqrt k = 0.1$, as suggested by the authors. The window length for XS, Chan and GLR-CuSum is set to $w=200$, which is much larger than the detection delay encountered in the considered simulations. In these three procedures, windowing is used only to reduce computational complexity, and the choice does not affect statistical performance as long as the window is large enough. For all procedures, the detection threshold is calibrated via 2000 Monte Carlo simulations such that ARL-level $\gamma = 5000$.

Figure~\ref{fig:compare_sparsity} presents the average detection delays for all procedures for a varying number of affected sensors $s$. In general, all procedures except for GLR-CuSum have a smaller detection delay when the change is sparse, even if $\norm{\btheta}_2$ (and hence KL-divergence between pre- and post-change) is the same for all values of $s$. This is because the algorithms are designed to utilize this extra structure present in $\btheta$. Across all levels of sparsity, the proposed PM-CuSum performs competitively with the alternatives. In the sparse regime, the observed detection delay is close to that of Chan and XS, which are specifically designed for sparse changes. In the dense case of $s \in \{50,100\}$, PM-CuSum performs better than sparsity-aware alternatives. Notably, the version of PM-CuSum utilizing the plug-in MLE predictive distribution performs much worse than the version utilizing the full predictive distribution across all levels of sparsity.

Moreover, the flexible nature of predictive distributions can take advantage of additional structure in $\btheta$. In Figure~\ref{fig:compare_sparsity_delta} we repeat the setup of Figure~\ref{fig:compare_sparsity}, except when sampling $\btheta$ we select $Z^{(j)} \sim \mathcal{N}(\delta, 1)_{1\{j\in \mathcal{S} \}}$ with $\delta = 0.3$. With $\delta > 0$, the non-zero components of $\btheta$ are slightly biased toward positive values, but still $\norm{\btheta}_2 = 1$. The results are presented in Figure~\ref{fig:compare_sparsity_delta}. For sparse changes the results are similar, but in the dense case of $s = {100}$, the proposed PM-CuSum algorithm performs noticeably better than alternatives except GLR-CuSum. Interestingly, the delay curve as a function of $s$ is non-monotone for PM-CuSum, but either increasing or approximately constant for the other methods. This follows from the use of two predictor families: the sparse predictor is well matched to changes affecting few components, and the dense predictor is well matched to changes affecting many components, especially when the values of the affected components are similar. For intermediate sparsity levels, neither model is as well matched to the post-change distribution, resulting in larger delay.
\begin{figure}[htpb]
    \centering
    
    \begin{subfigure}[t]{0.48\linewidth}
        \centering
        \includegraphics[width=\linewidth]{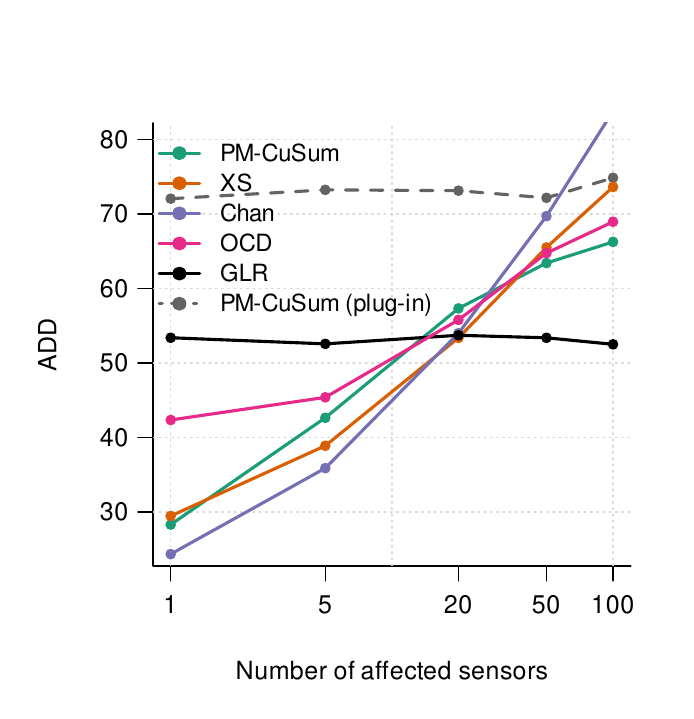}
        \caption{$\delta = 0$}
        \label{fig:compare_sparsity}
    \end{subfigure}
    \hfill
    \begin{subfigure}[t]{0.48\linewidth}
        \centering
        \includegraphics[width=\linewidth]{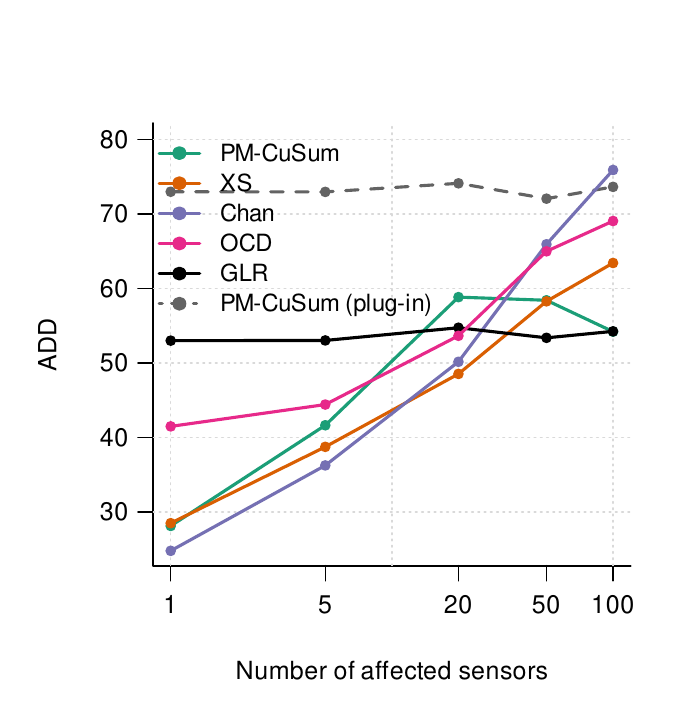}
        \caption{$\delta = 0.3$}
        \label{fig:compare_sparsity_delta}
    \end{subfigure}
    
    \caption{Comparison of detection delays of procedures across different levels of sparsity. The proposed PM-CuSum detector performs competitively across all levels of sparsity.}
\end{figure}

\section{Conclusion}\label{sec:conclusion}

In this paper, we have proposed PM-CuSum, a novel procedure for sequential change detection when the post-change distribution is unknown. By adaptively combining predictive distributions computed from different lengths of past data, the procedure can effectively detect changes while remaining computationally efficient. Analytically, it was established that the proposed procedure is first-order asymptotically optimal with respect to Lorden's delay under general conditions. Additionally, it was shown that the second-order term of the asymptotic detection delay is smaller than what can be achieved by any fixed single window. The results apply also to the case where the family of post-change distributions is nonparametric, as long as the distribution can be estimated with Kullback-Leibler loss that decreases sufficiently fast as a function of the sample size. The analysis focused on the i.i.d. setting, but for practical use, the proposed procedure can easily be extended to the case of dependent data by using predictors that take into account the dependence structure.

Additionally, we have made the observation that in the change-point detection problem with an unknown post-change distribution, the relevant notion of estimation error is the KL-divergence between the true and estimated post-change distribution. In this regard, it is known that simple plug-in distributions can be inferior to predictive distributions that take into account the uncertainty in the estimation of the post-change distribution. We demonstrated this phenomenon in a simulation example of detecting a change in the mean of high-dimensional Gaussian data, where the proposed PM-CuSum procedure with predictive distributions considerably outperforms the plug-in method, and performs competitively with state-of-the-art algorithms designed specifically for this problem.

While we have observed both empirically and theoretically that the Fixed Share algorithm works well for combining the predictors, in the future it would be of interest to study the applicability of other expert selection algorithms, such as those presented in \cite{jun_2017}. Finally, an interesting line of future work is extending ideas of this work to the case of a composite pre-change distribution.

\bibliographystyle{IEEEtran}
\bibliography{refs}

\begin{thebibliography}{10}
\providecommand{\url}[1]{#1}
\csname url@samestyle\endcsname
\providecommand{\newblock}{\relax}
\providecommand{\bibinfo}[2]{#2}
\providecommand{\BIBentrySTDinterwordspacing}{\spaceskip=0pt\relax}
\providecommand{\BIBentryALTinterwordstretchfactor}{4}
\providecommand{\BIBentryALTinterwordspacing}{\spaceskip=\fontdimen2\font plus
\BIBentryALTinterwordstretchfactor\fontdimen3\font minus \fontdimen4\font\relax}
\providecommand{\BIBforeignlanguage}[2]{{%
\expandafter\ifx\csname l@#1\endcsname\relax
\typeout{** WARNING: IEEEtran.bst: No hyphenation pattern has been}%
\typeout{** loaded for the language `#1'. Using the pattern for}%
\typeout{** the default language instead.}%
\else
\language=\csname l@#1\endcsname
\fi
#2}}
\providecommand{\BIBdecl}{\relax}
\BIBdecl

\bibitem{TARTAKOVSKY_BOOK}
A.~Tartakovsky, I.~Nikiforov, and M.~Basseville, \emph{Sequential {A}nalysis: Hypothesis {T}esting and {C}hangepoint {D}etection}.\hskip 1em plus 0.5em minus 0.4em\relax CRC Press, 2014.

\bibitem{POOR_BOOK}
H.~V. {Poor} and O.~Hadjiliadis, \emph{Quickest Detection}.\hskip 1em plus 0.5em minus 0.4em\relax Cambridge University Press, 2008.

\bibitem{PAGE_1954}
E.~S. Page, ``Continuous inspection schemes,'' \emph{Biometrika}, vol.~41, no. 1/2, pp. 100--115, 1954.

\bibitem{LORDEN_1971}
G.~Lorden, ``Procedures for reacting to a change in distribution,'' \emph{The Annals of Mathematical Statistics}, vol.~42, no.~6, pp. 1897--1908, 1971.

\bibitem{MOUSTAKIDES_1986}
G.~V. Moustakides, ``Optimal stopping times for detecting changes in distributions,'' \emph{The Annals of Statistics}, vol.~14, no.~4, pp. 1379--1387, 1986.

\bibitem{Roberts1966}
S.~W. Roberts, ``A comparison of some control chart procedures,'' \emph{Technometrics}, vol.~8, pp. 411--430, 1966.

\bibitem{Polunchenko}
A.~S. Polunchenko and A.~G. Tartakovsky, ``On optimality of the {Shiryaev-Roberts} procedure for detecting a change in distribution,'' \emph{The Annals of Statistics}, vol.~38, no.~6, pp. 3445--3457, 2010.

\bibitem{LAI_1998}
T.~L. Lai, ``Information bounds and quick detection of parameter changes in stochastic systems,'' \emph{IEEE Transactions on Information Theory}, vol.~44, no.~7, pp. 2917--2929, 1998.

\bibitem{wang_review}
H.~Wang and Y.~Xie, ``Sequential change-point detection: Computation versus statistical performance,'' \emph{WIREs Computational Statistics}, vol.~16, no.~1, p. e1628, 2024.

\bibitem{Romano2023}
G.~Romano, I.~A. Eckley, P.~Fearnhead, and G.~Rigaill, ``Fast online changepoint detection via functional pruning {CUSUM} statistics,'' \emph{Journal of Machine Learning Research}, vol.~24, pp. 1--36, 2023.

\bibitem{Ward2025}
K.~Ward, G.~Dilillo, I.~Eckley, and P.~Fearnhead, ``Poisson-focus: An efficient online method for detecting count bursts with application to gamma ray burst detection,'' \emph{Journal of the American Statistical Association}, vol. 120, pp. 7--19, 2025.

\bibitem{DeLucia22042026}
J.~DeLucia and H.~V. Poor, ``Sequential decision problems for marked poisson processes,'' \emph{Sequential Analysis}, 2026.

\bibitem{ROBBINS_1974}
H.~Robbins and D.~Siegmund, ``The expected sample size of some tests of power one,'' \emph{The Annals of Statistics}, vol.~2, no.~3, pp. 415 -- 436, 1974.

\bibitem{ROBBINS_1972}
------, ``A class of stopping rules for testing parametric hypotheses,'' in \emph{Proceedings of the Sixth Berkeley Symposium on Mathematical Statistics and Probability (Univ. California, Berkeley, CA, 1970/1971)}, vol.~4, 1972, pp. 37--41.

\bibitem{SPARKS_2000}
R.~S. Sparks, ``{CUSUM} charts for signalling varying location shifts,'' \emph{Journal of Quality Technology}, vol.~32, no.~2, pp. 157--171, 2000.

\bibitem{LORDEN_2005}
G.~Lorden and M.~Pollak, ``{Nonanticipating estimation applied to sequential analysis and changepoint detection},'' \emph{The Annals of Statistics}, vol.~33, no.~3, pp. 1422 -- 1454, 2005.

\bibitem{LORDEN_2008}
------, ``Sequential change-point detection procedures that are nearly optimal and computationally simple,'' \emph{Sequential Analysis}, vol.~27, no.~4, pp. 476--512, 2008.

\bibitem{TARTAKOVSKY_2006}
A.~G. Tartakovsky, B.~L. Rozovskii, R.~B. Blažek, and H.~Kim, ``Detection of intrusions in information systems by sequential change-point methods,'' \emph{Statistical Methodology}, vol.~3, no.~3, pp. 252--293, 2006.

\bibitem{CAO_2018}
Y.~Cao, L.~Xie, Y.~Xie, and H.~Xu, ``Sequential change-point detection via online convex optimization,'' \emph{Entropy}, vol.~20, no.~2, 2018.

\bibitem{XIE_2023}
L.~Xie, G.~V. Moustakides, and Y.~Xie, ``Window-limited {CUSUM} for sequential change detection,'' \emph{IEEE Transactions on Information Theory}, vol.~69, no.~9, pp. 5990--6005, 2023.

\bibitem{brown2008admissible}
L.~D. Brown, E.~I. George, and X.~Xu, ``Admissible predictive density estimation,'' \emph{The Annals of Statistics}, vol.~36, no.~3, pp. 1156--1170, 2008.

\bibitem{WANG_2015}
Y.~Wang and Y.~Mei, ``Large-scale multi-stream quickest change detection via shrinkage post-change estimation,'' \emph{IEEE Transactions on Information Theory}, vol.~61, no.~12, pp. 6926--6938, 2015.

\bibitem{halme2025quickest}
T.~Halme, V.~V. Veeravalli, and V.~Koivunen, ``Quickest change detection for multiple data streams using the {James-Stein} estimator,'' \emph{IEEE Transactions on Information Theory}, 2025.

\bibitem{shafer_betting}
G.~Shafer, ``Testing by betting: A strategy for statistical and scientific communication,'' \emph{Journal of the Royal Statistical Society: Series A (Statistics in Society)}, vol. 184, no.~2, pp. 407--431, 2021.

\bibitem{shin_ramdas_rinaldo_2024}
J.~Shin, A.~Ramdas, and A.~Rinaldo, ``E-detectors: A nonparametric framework for sequential change detection,'' \emph{The New England Journal of Statistics in Data Science}, vol.~2, no.~2, pp. 229--260, 2024.

\bibitem{Grunwald_MDL_tutorial}
P.~Grunwald, ``A tutorial introduction to the minimum description length principle,'' 2004.

\bibitem{AITCHINSON_Goodness}
J.~Aitchison, ``Goodness of prediction fit,'' \emph{Biometrika}, vol.~62, no.~3, pp. 547--554, 1975.

\bibitem{Herbester_fixed_share}
M.~Herbster and M.~K. Warmuth, ``Tracking the best expert,'' \emph{Machine {Learning}}, vol.~32, no.~2, pp. 151--178, 1998.

\bibitem{Cesa-Bianchi_Lugosi_2006}
N.~Cesa-Bianchi and G.~Lugosi, \emph{Prediction, Learning, and Games}.\hskip 1em plus 0.5em minus 0.4em\relax Cambridge University Press, 2006.

\bibitem{adamskiy}
D.~Adamskiy, W.~M. Koolen, A.~Chernov, and V.~Vovk, ``A closer look at adaptive regret,'' in \emph{International Conference on Algorithmic Learning Theory}.\hskip 1em plus 0.5em minus 0.4em\relax Springer, 2012, pp. 290--304.

\bibitem{waudbysmith2025universallogoptimalitygeneralclasses}
\BIBentryALTinterwordspacing
I.~Waudby-Smith, R.~Sandoval, and M.~I. Jordan, ``Universal log-optimality for general classes of e-processes and sequential hypothesis tests,'' 2025. [Online]. Available: \url{https://arxiv.org/abs/2504.02818}
\BIBentrySTDinterwordspacing

\bibitem{Liang2024QuickestEstimation}
Y.~Liang and V.~V. Veeravalli, ``{Quickest Change Detection with Post-Change Density Estimation},'' \emph{IEEE Transactions on Information Theory}, p.~1, 2024.

\bibitem{tsybakov2008nonparametric}
A.~B. Tsybakov, ``Introduction to nonparametric estimation,'' in \emph{Introduction to Nonparametric Estimation}.\hskip 1em plus 0.5em minus 0.4em\relax Springer New York, NY, 2008.

\bibitem{Barlow}
R.~E. Barlow, A.~W. Marshall, and F.~Proschan, ``{Properties of Probability Distributions with Monotone Hazard Rate},'' \emph{The Annals of Mathematical Statistics}, vol.~34, no.~2, pp. 375 -- 389, 1963.

\bibitem{Wald1947SequentialAnalysis}
A.~Wald, \emph{{Sequential Analysis}}.\hskip 1em plus 0.5em minus 0.4em\relax John Wiley {\&} Sons, Inc., 1947.

\bibitem{warner2024worst}
A.~Warner and G.~Fellouris, ``Worst-case misidentification control in sequential change diagnosis using the min-cusum,'' \emph{IEEE Transactions on Information Theory}, vol.~70, no.~11, pp. 8364--8377, 2024.

\bibitem{XIE_2013}
Y.~Xie and D.~Siegmund, ``{Sequential multi-sensor change-point detection},'' \emph{The Annals of Statistics}, vol.~41, no.~2, pp. 670 -- 692, 2013.

\bibitem{chan2017optimal}
H.~P. Chan, ``Optimal sequential detection in multi-stream data,'' \emph{The Annals of Statistics}, vol.~45, no.~6, pp. 2736--2763, 2017.

\bibitem{chen2022high}
Y.~Chen, T.~Wang, and R.~J. Samworth, ``High-dimensional, multiscale online changepoint detection,'' \emph{Journal of the Royal Statistical Society Series B: Statistical Methodology}, vol.~84, no.~1, pp. 234--266, 2022.

\bibitem{rockova2024adaptivebayesianpredictiveinference}
\BIBentryALTinterwordspacing
V.~Rockova, ``Adaptive bayesian predictive inference in high-dimensional regerssion,'' 2024. [Online]. Available: \url{https://arxiv.org/abs/2309.02369}
\BIBentrySTDinterwordspacing

\bibitem{Castillo_haystack}
I.~Castillo and A.~van~der Vaart, ``Needles and straw in a haystack: Posterior concentration for possibly sparse sequences,'' \emph{The Annals of Statistics}, vol.~40, no.~4, pp. 2069--2101, 2012.

\bibitem{ocd_package}
\BIBentryALTinterwordspacing
Y.~Chen, T.~Wang, and R.~J. Samworth, \emph{ocd: High-Dimensional Multiscale Online Changepoint Detection}, 2020, r package version 1.1. [Online]. Available: \url{https://CRAN.R-project.org/package=ocd}
\BIBentrySTDinterwordspacing

\bibitem{jun_2017}
K.-S. Jun, F.~Orabona, S.~Wright, and R.~Willett, ``Improved strongly adaptive online learning using coin betting,'' in \emph{Artificial Intelligence and Statistics}.\hskip 1em plus 0.5em minus 0.4em\relax PMLR, 2017, pp. 943--951.

\bibitem{Lorden1970OnBoundary}
G.~Lorden, ``{On Excess Over the Boundary},'' \emph{The Annals of Mathematical Statistics}, vol.~41, no.~2, pp. 520--527, 4 1970.

\bibitem{siegmund1985sequential}
D.~Siegmund, \emph{Sequential Analysis: Tests and Confidence Intervals}.\hskip 1em plus 0.5em minus 0.4em\relax Springer Science \& Business Media, 1985.

\bibitem{johnstone2005ebayesthresh}
I.~Johnstone and B.~W. Silverman, ``{EbayesThresh}: R programs for {E}mpirical {Bayes} thresholding,'' \emph{Journal of Statistical Software}, vol.~12, pp. 1--38, 2005.

\bibitem{petersen2008matrix}
K.~B. Petersen, M.~S. Pedersen \emph{et~al.}, ``The matrix cookbook,'' \emph{Technical University of Denmark}, vol.~7, no.~15, p. 510, 2008.

\bibitem{normallaplace_R}
\BIBentryALTinterwordspacing
D.~Scott, J.~S. Fu, and S.~Potter, \emph{NormalLaplace: The Normal Laplace Distribution}, 2025, r package version 0.3-2. [Online]. Available: \url{https://CRAN.R-project.org/package=NormalLaplace}
\BIBentrySTDinterwordspacing

\end{thebibliography}

\appendix
\section{Omitted Proofs} 
\subsection{Proof of Lemma~\ref{thm:arl} (ARL of Predictive Mixture CuSum)}\label{app:proof_of_arl}

The proof follows a standard argument commonly used in sequential change detection, see e.g. \cite[Lemma 8.2.1]{TARTAKOVSKY_BOOK}, but we provide all details for completeness. Consider a Shiryaev-Roberts type statistic
\begin{equation}
    V_n = (1+V_{n-1})e^{\lhat_n}, \quad V_0 = 0,
\end{equation}
and the stopping time $\tau_b = \inf\{n : V_n \geq e^b\}$. It can be shown that $e^{S_n} \leq V_n$, and therefore $T_b \geq \tau_b$. Assume $\Ex_\infty(T_b) < \infty$, otherwise the result is trivial. It then follows that $\Ex_\infty(\tau_b) < \infty$.
The key observation is that since $\hat p_n$ is $\mathcal F_{n-1}$-measurable and
$X_n\sim q$, we have
\begin{equation}\label{eq:unitmean}
\mathbb E_\infty[e^{\lhat_n}\mid \mathcal F_{n-1}]
= \int \frac{\phat_n(x)}{q(x)}q(x)\,dx
= \int \phat_n(x)\,dx
= 1
\end{equation}
and
\[
\mathbb E_\infty[V_n\mid \mathcal F_{n-1}]
=
(1+V_{n-1})\mathbb E_\infty[e^{\lhat_n}\mid \mathcal F_{n-1}]
=
1+V_{n-1}.
\]
Therefore, the process $M_n := V_n - n$
is a zero-mean martingale with respect to $\{\mathcal F_n\}$ under $\mathbb P_\infty$. Then by the optional stopping theorem $\Ex_\infty(M_{\tau_b}) = \Ex_\infty(V_{\tau_b} - \tau_b) = 0$. Since $V_{\tau_b}\geq e^b$ by definition of $\tau_b$, we have $\Ex_\infty(\tau_b) \geq e^b$. Since $T_b \geq \tau_b$, the claim follows.

\subsection{Proof of Theorem~\ref{thm:delay_thm}}\label{app:proof_of_delay}

In proving the result, we consider the statistic 
\begin{equation}
    U_n^{\nu} = \sum_{i=\nu+1}^{n} \lhat_i = \sum_{i = \nu+1}^n \log \frac{\phat_i(X_i)}{q(X_i)}, \quad n \geq \nu + 1, \quad U_1^\nu,\dots,U_\nu^\nu = 0,
\end{equation}
and the related stopping time $T^\nu = \inf\{n \geq 1 : U_n^\nu > b\}$.
Observe first that $U_n^\nu \leq S_n$ for all $n$ and $\nu$ by construction, and therefore $T_b \leq T^\nu$. Hence, it is sufficient to prove the delay upper bound for $T^\nu$. 

By Wald's identity \cite[Corollary 2.3.1]{TARTAKOVSKY_BOOK} and adding and subtracting terms, we have
\begin{align}
    \Ex_\nu&((T^\nu-\nu +1)^+\mid \calF_{\nu-1}) = \Ex_\nu(T^\nu-\nu +1\mid \calF_{\nu-1})
     \\
    &=\frac{1}{\KL{p}{q}} \Ex_\nu\left(\sum_{n=\nu}^{T^\nu} \log \frac{p(X_n)}{q(X_n)} \mid \calF_{\nu-1}\right) \\
    &= \frac{1}{\KL{p}{q}} \left[b + \Ex_\nu\left(\sum_{n=\nu}^{T^\nu} \log \frac{p(X_n)}{q(X_n)} - U_T^\nu \mid  \calF_{\nu-1}\right) + \Ex_\nu\left(U_T^\nu -b \mid  \calF_{\nu-1}\right)\right]\\
    &= \frac{1}{\KL{p}{q}} \Big[b + \Ex_\nu\left(\log \frac{p(X_\nu)}{q(X_\nu)}\right) +  \Ex_\nu\left(\sum_{n = \nu+1}^{T^\nu} \log\frac{p(X_n)}{\phat_n(X_n)} \mid \calF_{\nu-1}\right) + \Ex_\nu\left(U^\nu_{T^\nu} -b\mid \calF_{\nu-1}\right)\Big]\\
    &= 1 + \frac{1}{\KL{p}{q}} \Big[b 
    + \underbrace{\Ex_\nu\left(\sum_{n = \nu+1}^{T^\nu} \log\frac{p(X_n)}{\phat_n(X_n)} \mid \calF_{\nu-1}\right)}_{(*)} 
    + \underbrace{\Ex_\nu\left(U^\nu_{T^\nu} -b\mid \calF_{\nu-1}\right)}_{(**)}\Big].\label{eq:wald_decomp}
\end{align}

Term $(*)$ relates to the cumulative estimation error of the predictive distribution $\phat_n$, while $(**)$ corresponds to the overshoot of the test statistic over the threshold $b$. An upper bound for the overshoot $(**)$ derived later in Lemma~\ref{lemma:overshoot}. 

Term $(*)$ can be further decomposed as follows: For $m \geq 1$, let 
    \begin{equation}
        \tilde w(m) = \max\{w \in \calW : w \leq m\} = \min\{2^{\lfloor\log_2 m\rfloor}, W_\text{max}\}
    \end{equation}
     denote the longest window less than or equal to $m$ in the set of windows $\calW$. Then when $n \geq \nu+1$, $(X_{\tilde w(n - \nu)},\dots,X_{n-1})$ contains only post-change observations. We can split the total estimation error into two components
    \begin{equation}
        \sum_{n = \nu+1}^{T^\nu} \log\frac{p(X_n)}{\phat_n(X_n)} = \sum_{n = \nu+1}^{T^\nu} \log\frac{p(X_n)}{\phat^{\tilde w(n-\nu)}_n(X_n)} +  \sum_{n = \nu + 1}^{T^\nu}  \log\frac{\phat^{\tilde w(n-\nu)}_n(X_n)}{\phat_n(X_n)},
    \end{equation}
    where the first term on the right-hand side represents the log-loss of an oracle algorithm that always uses only the predictor that has the largest ``clean'' window ${\tilde w(n-\nu)}$ at time $n$, and the second term is log-loss of the fixed share algorithm with respect to this oracle. Upper bounds for the two terms are derived below in Lemmas~\ref{lemma:oracle_approx_error} and \ref{lemma:fs_regret}.

    \begin{lemma}[KL-loss of Fixed Share]\label{lemma:fs_regret}
        \begin{equation}
            \begin{split}
                \Ex_\nu&\left(\sum_{n = \nu+1}^{T^\nu} \log\frac{\phat^{\tilde w(n-\nu)}_n(X_n)}{\phat_n(X_n)}\mid  \calF_{\nu-1}\right)
                \leq \frac{1}{b}\Ex_\nu\left(T^\nu-\nu+1 \mid\calF_{\nu-1}\right) + O((\log b)^2) \\
            \end{split}
        \end{equation}
    \end{lemma}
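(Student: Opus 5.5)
The plan is to apply the Fixed Share switching-regret bound \eqref{eq:FS_bound} pathwise to the oracle comparator sequence $w_n = \tilde w(n-\nu)$, and then take conditional expectations.

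First, count the switches of the comparator. For $n = \nu+1,\dots,T^\nu$, the sequence $w_n = \tilde w(n-\nu) = \min\{2^{\lfloor \log_2 (n-\nu)\rfloor},\Wmax\}$ is nondecreasing. It only changes value when $n-\nu$ passes a power of two not exceeding $\Wmax$. Since $|\calW| = \lceil \log_2 b\rceil$, the comparator has at most $K-1 \le |\calW| = O(\log b)$ switches, uniformly over the realized length of the horizon.

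Second, restart the regret bound at time $\nu$. The bound \eqref{eq:FS_bound} is stated from time $1$ with uniform initial weights, but here the sum starts at $\nu+1$ with arbitrary weights $\pi_{\nu+1}^{(w)}$. These weights depend on the worst-case pre-change data. The share step guarantees $\pi_{\nu+1}^{(w)} \ge \alpha/|\calW|$ for every $w$, so the standard Fixed Share argument goes through from any starting time. The usual $\log|\calW|$ initial term is replaced by $\log(|\calW|/\alpha)$, which is $O(\log b)$. Concretely, I would redo the Herbster--Warmuth potential argument: compare $\log \prod_n \phat_n(X_n)$ with the log of the mixture weight assigned to the comparator path. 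Each step in which the path stays on the same expert costs at most $-\log(1-\alpha)$. Each switch costs at most $\log(|\calW|/\alpha)$. The initial step costs $\log(|\calW|/\alpha)$. This yields, pathwise for every $N \ge \nu+1$,
\[
\sum_{n=\nu+1}^{N}\log\frac{\phat_n^{(w_n)}(X_n)}{\phat_n(X_n)} \le K\log\frac{|\calW|}{\alpha} + (N-\nu)\log\frac{1}{1-\alpha}.
\]

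Third, plug in $\alpha = 1/b$ and $K = O(\log b)$. The first term becomes $O(\log b)\cdot\log(b\lceil\log_2 b\rceil) = O((\log b)^2)$. For the second term, $\log\frac{1}{1-1/b} \le \frac{1}{b} + O(b^{-2})$. Since $N-\nu \le T^\nu-\nu+1$, and the $O(b^{-2})$ correction multiplied by the expected delay, which is $O(b)$, is $o(1)$, this gives $\frac1b(T^\nu-\nu+1)$ up to an absorbed constant.

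Finally, evaluate at $N = T^\nu$ and take $\Ex_\nu(\cdot\mid\calF_{\nu-1})$. This is legitimate because the bound holds pathwise for every $N$, hence at the random time $N = T^\nu$. No optional stopping argument is needed. The result is the claimed inequality.

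The main obstacle I expect is the restart issue: justifying that the regret bound holds from $\nu+1$ under arbitrary (adversarial) pre-change weights, using only the lower bound $\alpha/|\calW|$ on the weights. A secondary point is that the $1/\alpha = b$ factor must enter only logarithmically. I would handle both in the explicit potential argument above.
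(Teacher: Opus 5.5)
Your proposal is correct and follows essentially the paper's route. Both arguments use a pathwise Fixed Share regret bound against the piecewise-constant comparator $\tilde w(n-\nu)$, which has $O(\log b)$ switches and must hold irrespective of the weights at time $\nu$. Both then set $\alpha=1/b$ and $|\calW|=O(\log b)$ and take a conditional expectation.

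The only difference is how restart-invariance is obtained. The paper applies the interval (adaptive) regret bound of Adamskiy et al.\ on each of the $|\calW|$ segments where $\tilde w(n-\nu)$ is constant, and sums. You instead rederive a switching-regret bound from the path-prior representation of Fixed Share, using the floor $\pi^{(w)}_{\nu+1}\ge \alpha/|\calW|$. Both yield the same $O(|\calW|\log(|\calW|/\alpha))$ term.

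One small caution: absorbing the $O(b^{-2})(T^\nu-\nu+1)$ correction by appealing to the expected delay being $O(b)$ is circular, because this lemma is an input to that delay bound. It is cleaner to keep the coefficient $\log\frac{1}{1-\alpha}\le\frac{1}{b-1}$, or to choose $\alpha=1-e^{-1/b}$ so that the coefficient is exactly $1/b$ while $\log(1/\alpha)\le\log(b+1)$. Either version works identically in the final rearrangement of the theorem. The paper's replacement of $(T^\nu-\nu+1)\log\frac{1}{1-\alpha}$ by $(T^\nu-\nu+1)/b+O(1)$ glosses over the same point.
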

    \begin{proof}

    In proving the claim, we can utilize existing regret bounds for the Fixed Share algorithm. Each of the windows in the set $\calW$ represents an ``expert'', 
    and for any sequence of data and any interval $[t_1, t_2]$, $t_1 > 1$, it holds that \cite[Corollary 5]{adamskiy}
    \begin{equation}\label{eq:fs_adaptive_loss}
        \sum_{n=t_1}^{t_2} \log \frac{\phat_n^{(w)}(X_n)}{\phat_n(X_n)} \leq  \log (|\calW|-1) + \log\left(\frac{1}{\alpha}\right) + (t_2-t_1)\log\left(\frac{1}{1-\alpha}\right),
    \end{equation}
    where $|\calW|$ is the number of windows (or experts) and $\alpha$ is the fixed share parameter. In the literature on prediction with expert advice, this notion of regret is often called adaptive regret \cite{adamskiy}. Therefore, the total regret of the fixed share algorithm with respect to the oracle sequence $\{\tilde w(n-\nu)\}_{n = \nu +1}^{T^\nu}$ is upper bounded by
    \begin{align}
        \sum_{n = \nu + 1}^{T^\nu}  \log\frac{\phat^{\tilde w(n-\nu)}_n(X_n)}{\phat_n(X_n)} 
        & = \sum_{n = 1}^{T^\nu-\nu}  \log\frac{\phat^{\tilde w(n)}_{\nu + n}(X_{\nu + n})}{\phat_{\nu + n}(X_{\nu + n})} 
        = \sum_{w \in \calW}\sum_{n = 1}^{T^\nu-\nu} \ind{\tilde w(n) = w}  \log\frac{\phat^{(w)}_{\nu + n}(X_{\nu + n})}{\phat_{\nu + n}(X_{\nu + n})} \\
        &\leq |\calW| \log |\calW| + |\calW| \log\left(\frac {1}{\alpha}\right) + (T^\nu-\nu+1)\log\left(\frac {1}{1-\alpha}\right)\label{eq:fs_cumult_loss} \\
        &= \lceil\log_2 b\rceil \log(\lceil\log_2 b\rceil) + \lceil\log_2 b\rceil \log b +(T^\nu - \nu + 1)\frac{1}{b} + O(1)\label{eq:fs_cumult_loss_b} \\
        &\leq O(\log b\log\log b +\log^2b) + (T^\nu - \nu + 1)\frac{1}{b} \\
        &= (T^\nu - \nu + 1)\frac{1}{b} + O((\log b)^2)\label{eq:fs_cumult_loss_simple}
    \end{align}
    The above follows since $\{\tilde w(n-\nu)\}_{n = \nu +1}^{T^\nu}$ can be partitioned into $|\calW|$ segments where the window is fixed inside each segment, and utilizing~\eqref{eq:fs_adaptive_loss}. Since $\tilde{w}(n-\nu)$ is piecewise constant as $n$ grows and changes $|\calW|-1$ times, summing the interval regret bounds over the segments where $\tilde w(n-\nu)$ is constant yields the bound in \eqref{eq:fs_cumult_loss}.
    We note that upper bounds of the form~\eqref{eq:fs_cumult_loss} exist in the literature and are known as the switching regret \cite[eq. 6]{adamskiy}, \cite{Herbester_fixed_share}. The step from \eqref{eq:fs_adaptive_loss} to \eqref{eq:fs_cumult_loss} was presented to emphasize that \eqref{eq:fs_cumult_loss} holds irrespective of what the fixed share weights $\pi_\nu^{(w)}$ are at time $\nu$. Finally, \eqref{eq:fs_cumult_loss_b} is obtained by inserting $|\calW| = \lceil \log_2 b\rceil = O(\log b)$ and $\alpha = 1/b$ into \eqref{eq:fs_cumult_loss}. The claim follows by taking expectation of \eqref{eq:fs_cumult_loss_simple} on both sides.
\end{proof}

\begin{lemma}[KL-loss of oracle]\label{lemma:oracle_approx_error}
    \begin{equation}\label{eq:lemma2_statement}
        \Ex_\nu\left(\sum_{n = \nu+1}^{T^\nu} \log\frac{p(X_n)}{\phat^{{\tilde w(n-\nu)}}_n(X_n)} \mid \calF_{\nu-1} \right) \leq \frac{C_{1}}{b^\beta}\Ex_\nu(T^\nu - \nu +1 | \calF_{\nu-1}) + O(\zeta(b, \beta)),
    \end{equation}
    where
    \begin{equation}\label{eq:xi_def}
        \zeta(b, \beta) = \sum_{k=0}^{|\mathcal W|-1}2^{(1-\beta)k} = \begin{cases}
            O(\log b), \quad &\beta = 1 \\
            O(b^{1-\beta}), &0 < \beta < 1
        \end{cases}
    \end{equation}
\end{lemma}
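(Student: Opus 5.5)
We convert the conditional expectation of the random sum into a sum of per-step KL-losses via optional stopping, and then bound those losses with Assumption~\ref{assumption:kl_loss}. For $n\ge\nu+1$, set
\[
D_n=\log\frac{p(X_n)}{\phat^{\tilde w(n-\nu)}_n(X_n)} .
\]
Its window $(X_{n-\tilde w(n-\nu)},\dots,X_{n-1})$ contains only post-change observations, because $\tilde w(n-\nu)\le n-\nu$. These observations are i.i.d.\ from $P$ and independent of $\calF_{\nu-1}$ and of $X_n$. Hence
\[
\Ex_\nu\bigl(D_n\mid\calF_{n-1}\bigr)=\KL{p}{\phat^{\tilde w(n-\nu)}_n}
\]
depends only on the clean window, and its expectation has the same law as under $\Ex_1^P$. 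Assumption~\ref{assumption:kl_loss} then gives
\[
r_n:=\Ex_\nu\bigl(D_n\mid\calF_{\nu-1}\bigr)\le C_1\,\tilde w(n-\nu)^{-\beta}.
\]
The event $\{T^\nu\ge n\}$ lies in $\calF_{n-1}$, and each $D_n$ has conditional mean $\KL{p}{\phat_n^{\tilde w(n-\nu)}}\ge 0$. By Wald-type optional stopping (or simply Fubini, with the indicators being predictable), we obtain
\[
\Ex_\nu\Bigl(\sum_{n=\nu+1}^{T^\nu}D_n\mid\calF_{\nu-1}\Bigr)=\sum_{m\ge1}\Ex_\nu\Bigl(\ind{T^\nu\ge\nu+m}\,\KL{p}{\phat^{\tilde w(m)}_{\nu+m}}\mid\calF_{\nu-1}\Bigr).
\]
An integrability condition is needed for this step; if necessary we justify it by truncating $T^\nu\wedge N$ and letting $N\to\infty$ via monotone convergence, which works since the conditional increments are nonnegative.

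The delicate step is that the indicator $\ind{T^\nu\ge\nu+m}$ and the KL-loss are dependent, because both are functions of the same window data. We cannot simply factor the expectation into $\Pr(T^\nu\ge\nu+m)\,r_{\nu+m}$. I would first try to avoid the issue by using a deterministic bound on the KL term. The hoped-for simplification is to bound $\KL{p}{\phat^{(w)}_n}$ by something like $C_1 w^{-\beta}$ almost surely, which would amount to reading Assumption~\ref{assumption:kl_loss} as holding conditionally on the window. That assumption is too strong, so the realistic route is an upper bound: drop the indicator for early times and keep it only for large times. Concretely, split at $m=\Wmax$. For $m\le\Wmax$, bound the indicator by $1$, giving the deterministic contribution $\sum_{m\le\Wmax}r_{\nu+m}$. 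For $m>\Wmax$ the window equals $\Wmax$, and we use the indicator there. If the dependence still blocks a clean factorization in the tail, I would fall back on a Cauchy–Schwarz or tail bound, or accept the assumption interpreted in its conditional form, as the paper's argument implicitly does.

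The remaining steps are counting. The window $\tilde w(m)=2^k$ is active for $m\in[2^k,2^{k+1})$, which is $2^k$ values of $m$. So the early part is at most
\[
\sum_{k=0}^{|\calW|-1}2^k\,C_1\,2^{-\beta k}=C_1\,\zeta(b,\beta).
\]
This is a geometric sum: it equals $|\calW|=O(\log b)$ when $\beta=1$, and it is of order $2^{(1-\beta)|\calW|}=O(b^{1-\beta})$ when $\beta<1$, since $2^{|\calW|}\asymp b$. In the tail, every term is at most $C_1\Wmax^{-\beta}\le C_1 b^{-\beta}$ (as $\Wmax\ge b$ up to constants), and the number of steps is at most $T^\nu-\nu+1$. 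This gives the term $\frac{C_1}{b^\beta}\Ex_\nu(T^\nu-\nu+1\mid\calF_{\nu-1})$, and together with the early part this yields \eqref{eq:lemma2_statement}. The main obstacle is the justification of the tail factorization described above; everything else is bookkeeping.
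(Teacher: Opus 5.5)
\textbf{Verdict: there is a genuine gap, at exactly the step you call the main obstacle.} Your reduction to $\sum_m \Ex_\nu\bigl(\ind{T^\nu\ge\nu+m}\,\KL{p}{\phat^{\tilde w(m)}_{\nu+m}}\mid\calF_{\nu-1}\bigr)$ is fine. So are bounding the early indicators by one and the geometric counting.

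The problem is the tail. You claim each term there is at most $C_1\Wmax^{-\beta}\,\mathbb{P}_\nu(T^\nu\ge\nu+m\mid\calF_{\nu-1})$, which treats ``every term is at most $C_1\Wmax^{-\beta}$'' as if it held pointwise. Assumption~\ref{assumption:kl_loss} only controls the \emph{unconditional} mean of the KL-loss. The continuation event is determined by the same window data, and nothing prevents the KL-loss from being larger than average on it. Intuitively, a poor predictor slows the growth of $U^\nu_n$ and makes $\{T^\nu\ge n\}$ more likely. So the factorization is unjustified, and plausibly fails in the direction you need.

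Your fallbacks do not close this. Cauchy--Schwarz needs a second moment of the KL-loss that the assumption does not provide. It also produces $\sum_m\mathbb{P}_\nu(T^\nu\ge\nu+m\mid\calF_{\nu-1})^{1/2}$, which is not controlled by $\Ex_\nu(T^\nu-\nu+1\mid\calF_{\nu-1})$ in general. An almost-sure version of the assumption is false already for the Gaussian plug-in predictor, whose KL-loss $\norm{\bthetahat^{(w)}_n-\btheta}^2/2$ is unbounded. The paper does not use any such conditional reading.

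\textbf{The missing idea: lag the indicator by $\Wmax$ and use the finite memory of the oracle predictor.}
\begin{itemize}
\item Since $D_n$ has nonnegative conditional mean given $\calF_{n-1}$, you may replace $\ind{T^\nu\ge n}$ by the larger indicator $\ind{T^\nu\ge n-\Wmax}$. The paper does this by extending the sum to $T^\nu+\Wmax$ and discarding the nonnegative excess.
\item The lagged indicator is $\calF_{n-\Wmax-1}$-measurable, because $T^\nu$ is a stopping time.
\item $D_n$ depends only on $X_{n-\Wmax},\dots,X_n$, because no window exceeds $\Wmax$. Hence $D_n$ is independent of $\calF_{n-\Wmax-1}$.
\end{itemize}
The expectation then factors exactly:
\[
\Ex_\nu\bigl(D_n\,\ind{T^\nu\ge n-\Wmax}\mid\calF_{\nu-1}\bigr)=r_n\,\mathbb{P}_\nu\bigl(T^\nu\ge n-\Wmax\mid\calF_{\nu-1}\bigr).
\]

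Summing over $n$:
\begin{itemize}
\item For $n-\nu\le\Wmax$ the indicator equals one, since $T^\nu\ge\nu+1$. These terms give at most $C_1\zeta(b,\beta)+C_1\Wmax^{-\beta}$.
\item The remaining terms give at most $C_1\Wmax^{-\beta}\,\Ex_\nu(T^\nu-\nu\mid\calF_{\nu-1})\le C_1 b^{-\beta}\,\Ex_\nu(T^\nu-\nu+1\mid\calF_{\nu-1})$.
\end{itemize}
Compared with your intended tail bound, the lag costs at most $\Wmax$ extra terms of size $C_1\Wmax^{-\beta}$. That is $O(\Wmax^{1-\beta})=O(\zeta(b,\beta))$, so your counting then goes through unchanged.
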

    \begin{proof}
        Let us denote the terms in the sum on the left-hand side of \eqref{eq:lemma2_statement} by 
      \begin{equation}
            R_n = \log\frac{p(X_n)}{\phat^{{\tilde w(n-\nu)}}_n(X_n)}
      \end{equation} 
      for $n \geq \nu +1$.
        We have 
        \begin{equation}
            \Ex_\nu\left(\sum_{n = \nu+1}^{T^\nu} \log\frac{p(X_n)}{\phat^{{\tilde w(n-\nu)}}_n(X_n)} \mid \calF_{\nu-1} \right) = \Ex_\nu\left(\sum_{n = \nu+1}^{T^\nu + \Wmax} R_n \mid \calF_{\nu-1}\right) - \Ex_\nu\left(\sum_{n = T^\nu +  1}^{T^\nu + \Wmax} R_n \mid \calF_{\nu-1}\right). \label{eq:est_error_decomp}
        \end{equation}
        For the first term in the RHS of \eqref{eq:est_error_decomp}, we utilize a similar technique as in the proof of Theorem 1 of \cite{XIE_2023}. We make use of the fact that since the maximum window length used by the procedure is $\Wmax$, the sequence $\{R_n\}$ is $\Wmax$-dependent, i.e. $R_n$ is independent of $R_{n-\Wmax-1}$. We have
        \begin{align}
            \Ex_\nu\left(\sum_{n = \nu + 1}^{T^\nu + \Wmax}R_n \mid \calF_{\nu-1} \right) 
            &\overset{\text{}}{=} \Ex_\nu\left(\sum_{n = \nu + 1}^{\infty}R_n\ind{n \leq T^\nu + \Wmax} \mid \calF_{\nu-1} \right) \\
            &\overset{\text{(a)}}{=} \Ex_\nu\left(\sum_{n = \nu + 1}^{\infty}\Ex_\nu(R_n\ind{n \leq T^\nu + \Wmax}\mid \calF_{n-\Wmax -1}) \Big|  \calF_{\nu-1} \right) \\
            &\overset{\text{(b)}}{=} \Ex_\nu\left(\sum_{n=\nu+1}^{\infty}\Ex_\nu(R_n \mid\calF_{n-\Wmax-1})\ind{T^\nu + \Wmax \geq n} \Big| \calF_{\nu-1}\right) \\
            &\overset{\text{(c)}}{=} \Ex_\nu\left(\sum_{n=\nu + 1}^{T^\nu + \Wmax} \Ex_\nu(R_n) \Big | \calF_{\nu - 1}\right). \label{eq:R_sum}
        \end{align}
            where (a) is the tower property, (b) follows since $\{n \leq T^\nu + \Wmax\}$ is $\calF_{n-\Wmax -1}$-measurable and $R_n$ is independent of $\calF_{n-\Wmax-1}$, and (c) follows since $R_n$ is independent of $\calF_{n-\Wmax -1}$.

        The expectation inside the sum in \eqref{eq:R_sum} can be bounded using Assumption~\ref{assumption:kl_loss} by
        \begin{align}
            \Ex_\nu(R_{n})
            = \Ex_\nu\left(\log\frac{p(X_{n})}{\phat^{{(\tilde w(n-\nu))}}_{n}(X_{n})}\right)
            &=\Ex_1\left(\log\frac{p(X_{n})}{\phat^{(\tilde w(n-\nu))}_{n}(X_{n})}\right) =  \calR_1(\phat_{n}^{(\tilde w(n-\nu))}) \leq \frac{C_1}{\tilde w(n-\nu)^\beta} 
        \end{align}
        Then, using the definition of $\tilde{w}(n)$ and algebra, the sum of the expectations satisfies
        \begin{align}
            \sum_{n=\nu+1}^{T^\nu + \Wmax}\Ex_\nu(R_{n}) \leq \sum_{n=\nu+1}^{T^\nu + \Wmax}\frac{C_1}{\tilde w(n-\nu)^\beta}
            &= C_1\sum_{m=1}^{T^\nu +\Wmax -\nu}\frac{1}{\tilde w(m)^\beta} \notag\\
            &= C_1\left(\sum_{m=1}^{W_{\max}-1}\frac{1}{2^{\beta\lfloor \log_2 m\rfloor}}
            +\sum_{m=W_{\max}}^{T^\nu-\nu}\frac{1}{W_{\max}^\beta}\right) \notag\\
            &= C_1\left(\sum_{k=0}^{|\mathcal W|-1}2^k\,2^{-\beta k}
            +\frac{T^\nu-\nu+1-W_{\max}}{W_{\max}^\beta}\right) \notag\\
            &= C_1\left(\sum_{k=0}^{|\mathcal W|-1}2^{(1-\beta)k}
            +\frac{T^\nu-\nu+1-W_{\max}}{W_{\max}^\beta}\right) \\
            &\le C_{1}\left(\zeta(b,\beta)
            +\frac{T^\nu-\nu+1}{b^\beta}\right),\label{eq:beta_sum_final}
            \end{align}
            where $\zeta(b, \beta)$ was defined in \eqref{eq:xi_def} and in the last inequality we used the fact that $\Wmax = 2^{\lceil\log_2 b\rceil} \geq b$.
            Inserting \eqref{eq:beta_sum_final} into \eqref{eq:R_sum}, we obtain 
            \begin{equation}\label{eq:est_error_first_final}
                \Ex_\nu\left(\sum_{n = \nu + 1}^{T^\nu + \Wmax}R_n \mid \calF_{\nu-1} \right) \leq \frac{C_{1}}{b^\beta}\Ex_\nu(T^\nu - \nu +1 | \calF_{\nu-1}) + O(\zeta(b, \beta)).
            \end{equation}
            
            To prove \eqref{eq:lemma2_statement}, it remains to show that the quantity being subtracted in \eqref{eq:est_error_decomp} is non-negative. Since KL-divergence is non-negative, 
            $\Ex_\nu(R_n | \calF_{n-1}) \geq 0$ and we obtain the simple lower bound 
            \begin{align}
                \Ex_\nu\left(\sum_{n = T^\nu + 1}^{T^\nu + \Wmax} R_n ~\Big|~ \calF_{\nu-1}\right) &=  \Ex_\nu\left(\sum_{n = \nu + 1}^{\infty} R_n\ind{T^\nu + 1 \leq n \leq T^\nu + \Wmax} \mid \calF_{\nu-1}\right) \\
                & =\Ex_\nu\left(\sum_{n = \nu + 1}^{\infty} \Ex_\nu(R_n^\nu\mid\calF_{n-1})\ind{T^\nu + 1 \leq n \leq T^\nu + \Wmax} \mid \calF_{\nu-1}\right) \\
                &\geq 0. \label{eq:correction_final}
            \end{align}
            Equations \eqref{eq:est_error_first_final} and \eqref{eq:correction_final} together with \eqref{eq:est_error_decomp} prove \eqref{eq:lemma2_statement}.
            
            To confirm \eqref{eq:xi_def}, observe that when $\beta = 1$,
            \begin{equation}
                \zeta(b, \beta) = \sum_{k=0}^{|\mathcal W|-1}2^{(1-\beta)k} = \sum_{k=0}^{|\mathcal W|-1} = |\mathcal W| = \lceil\log_2 b\rceil = O(\log b)
            \end{equation}
            For \(0<\beta<1\), $\zeta(b, \beta)$ is a geometric series, so that
            \begin{align}
            \zeta(b, \beta) = \frac{2^{(1-\beta)|\mathcal W|}-1}{2^{1-\beta}-1} =
            \frac{W_{\max}^{\,1-\beta}-1}{2^{1-\beta}-1} = O(b^{1-\beta}),
            \end{align}
            since $\Wmax = 2^{\lceil\log_2 b\rceil} \leq 2b$.

        \end{proof}

        \begin{lemma}[Upper-bound on overshoot]\label{lemma:overshoot}
            \begin{equation}
                \Ex_\nu(U^\nu_{T^\nu} - b \mid  \calF_{\nu-1}) \leq C_2
            \end{equation}
        \end{lemma}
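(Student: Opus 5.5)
The plan is to decompose the overshoot according to the time step at which the threshold is crossed, and then control the last increment $\lhat_n$ with Assumption~\ref{assumption:overshoot_bound}.

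On the event $\{T^\nu = n\}$ we have $U^\nu_{n-1}\le b$ and $U^\nu_n = U^\nu_{n-1}+\lhat_n > b$. Write $t_{n} := b - U^\nu_{n-1}\ge 0$, which is $\calF_{n-1}$-measurable. Then
\[
U^\nu_{T^\nu}-b = \sum_{n\ge \nu+1} \ind{T^\nu = n}\,(\lhat_n - t_n) = \sum_{n\ge\nu+1}\ind{T^\nu > n-1}\,\ind{\lhat_n > t_n}\,(\lhat_n - t_n).
\]
The event $\{T^\nu>n-1\}$ is $\calF_{n-1}$-measurable, since it is determined by $U^\nu_\nu,\dots,U^\nu_{n-1}$.

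For each term I will condition on $\calF_{n-1}$ and use the tower property, just as in the proof of Lemma~\ref{lemma:oracle_approx_error}. Because $t_n$ is $\calF_{n-1}$-measurable, Assumption~\ref{assumption:overshoot_bound} evaluated at $t=t_n$ gives
\[
\Ex_\nu\big((\lhat_n-t_n)\ind{\lhat_n\ge t_n}\mid\calF_{n-1}\big)
= \Ex_\nu(\lhat_n-t_n\mid \lhat_n\ge t_n,\calF_{n-1})\,\mathbb P_\nu(\lhat_n\ge t_n\mid\calF_{n-1})
\le C_2\,\mathbb P_\nu(\lhat_n > t_n\mid\calF_{n-1}).
\]
The events $\{\lhat_n>t_n\}$ and $\{\lhat_n\ge t_n\}$ differ only by a set on which $\lhat_n-t_n=0$, so that difference contributes nothing. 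Substituting back gives
\[
\Ex_\nu(U^\nu_{T^\nu}-b\mid\calF_{\nu-1}) \le C_2 \sum_{n\ge\nu+1}\mathbb P_\nu(T^\nu>n-1,\ \lhat_n>t_n\mid\calF_{\nu-1}) = C_2\,\mathbb P_\nu(T^\nu<\infty\mid\calF_{\nu-1})\le C_2 .
\]
This works because the events $\{T^\nu = n\}$ are disjoint.

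The main obstacle is measure-theoretic rather than computational. The first issue is justifying the interchange of the infinite sum and the conditional expectation. The summands are nonnegative, so monotone convergence handles this directly. The second issue is that $t_n$ is a random, $\calF_{n-1}$-measurable level, whereas Assumption~\ref{assumption:overshoot_bound} is stated for deterministic $t$. I intend to use the fact that the assumption takes a supremum over $t\ge 0$ and holds almost surely, together with a regular conditional distribution of $\lhat_n$ given $\calF_{n-1}$, so that the bound may be applied pointwise at $t=t_n(\omega)$. If that is too delicate, I would first prove the bound for $t_n$ taking countably many values, and then pass to the limit by approximating $t_n$ from above by dyadic levels.
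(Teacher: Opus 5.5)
Your proof follows the paper's route and is correct. Both write the overshoot as the last increment $\lhat_{T^\nu}$ minus the predictable gap $b-U^\nu_{T^\nu-1}$, then apply Assumption~\ref{assumption:overshoot_bound} conditionally on the past. The paper does this informally, by conditioning on $\calF_{T^\nu-1}$ and $T^\nu$. Your decomposition over the disjoint events $\{T^\nu=n\}$ makes that step rigorous: it uses that $\{T^\nu>n-1\}$ and $t_n$ are $\calF_{n-1}$-measurable, and it sums to $\mathbb P_\nu(T^\nu<\infty\mid\calF_{\nu-1})$.

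One step needs a small repair. Your remark that the integrand vanishes on $\{\lhat_n=t_n\}$ only lets you swap $\ind{\lhat_n>t_n}$ for $\ind{\lhat_n\ge t_n}$ inside the expectation. It does not let you replace $\mathbb P_\nu(\lhat_n\ge t_n\mid\calF_{n-1})$ by $\mathbb P_\nu(\lhat_n>t_n\mid\calF_{n-1})$ on the right-hand side. These differ when $\lhat_n$ has a conditional atom at $t_n$, as can happen with discrete data. With the $\ge$ version, the events $\{T^\nu>n-1,\,U^\nu_n\ge b\}$ need not be disjoint.

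To fix it, apply the assumption at levels $s>t_n$ and let $s\downarrow t_n$. Both $(\lhat_n-s)^+$ and $\mathbb P_\nu(\lhat_n\ge s\mid\calF_{n-1})$ increase to their strict-inequality limits by monotone convergence. Your dyadic approximation of $t_n$ from above gives exactly this.
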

        \begin{proof}
            Write $r =  b - U^\nu_{T^\nu-1}$ as the gap between the threshold and the test statistic at time $T^\nu-1$. Then, by the definition of $T^\nu$, we have $\lhat_{T^\nu} > r$ on the event $\{T^\nu \geq \nu\}$, and therefore
            \begin{align*}
                \Ex_\nu(U^\nu_{T^\nu} -  b \mid  \calF_{\nu-1}) 
                &= \Ex_\nu[\Ex_\nu(U^\nu_{T^\nu} -  b \mid \calF_{T^\nu-1})] \\
                &= \Ex_\nu[\Ex_\nu(\lhat_{T^\nu} - (b - U^\nu_{T^\nu-1}) \mid \calF_{T^\nu-1}, T^\nu)] \\
                &=  \Ex_\nu[\Ex_\nu(\lhat_{T^\nu} -r \mid \calF_{T^\nu-1}, T^\nu)] \\
                &\leq \Ex_\nu\left[\sup_{r \geq 0}\Ex_\nu(\lhat_{T^\nu} -r \mid \calF_{T^\nu-1}, \lhat_{T^\nu} > r )\right]\\
                &\leq C_2.
            \end{align*}
            The last inequality follows from Assumption~\ref{assumption:overshoot_bound}.
        \end{proof}

            \subsection*{Proof of Theorem~\ref{thm:delay_thm}}
            Writing $x = \Ex_\nu((T^\nu-\nu +1)^+\mid \calF_{\nu-1})$ for brevity, we obtain from \eqref{eq:wald_decomp} and Lemmas \ref{lemma:fs_regret}, \ref{lemma:oracle_approx_error} and \ref{lemma:overshoot}, the inequality
            \begin{align}
                x &= 1 + \frac{1}{\KL{p}{q}} \left(b + \frac{x}{b} + O((\log b)^2)+ \frac{C_1}{b^\beta} x + O( \zeta(b,\beta)) + C_2 \right) \\
                 &\leq \frac{b}{\KL{p}{q}} + C\left(\frac{x}{b^\beta} +  (\log b)^2 + \zeta(b, \beta)\right),
            \end{align}
            for some constant $C >0$ independent of $b$.
            Rearranging,    
            \begin{equation}
                x \leq \left(1-\frac{C}{b^\beta}\right)^{-1}\left(\frac{b}{\KL{p}{q}} + (\log b)^2 + \zeta(b,\beta)\right),
            \end{equation}
            where $(1-Cb^{-\beta})^{-1} = 1 + O(b^{-\beta})$ for large $b$. Multiplying and dropping all terms asymptotically smaller than $\log b$, we obtain
            \begin{equation}\label{eq:delay_O_remainder}
                x \leq \frac{b}{\KL{p}{q}} + O(b^{1-\beta} + (\log b)^2 + \zeta(b,\beta)).
            \end{equation}
            Recall from \eqref{eq:xi_def} that $\zeta(b,\beta) = O(\log b)$ if $\beta = 1$, and $O(b^{1-\beta})$ otherwise. Therefore, if $\beta = 1$, the dominant remainder term in \eqref{eq:delay_O_remainder} is $O((\log b)^2)$, and if $0 < \beta < 1$, the dominant term is $b^{1-\beta}$, since $b^{1-\beta} >> (\log b)^2$ as $b \to \infty$. Finally, from Lemma~\ref{thm:arl}, we can substitute the threshold $b = \log \gamma$, and the proof is complete.       %

\section{Discussion of assumptions on the overshoot}\label{appendix:overshoot_discussion}

We note that Assumption~\ref{assumption:overshoot_bound} used for bounding the overshoot 
\begin{equation}
    \varkappa(b) = \Ex_\nu^P(S_{T_b} - b )
\end{equation}
may not be strictly necessary for obtaining the asymptotic rates of Theorem~\ref{thm:delay_thm}. Using the assumption, we obtain in Lemma~\ref{lemma:overshoot} in Appendix~A that $\varkappa(b)$ is bounded by a constant $C_2$ for all values of $b$. In the asymptotic regime $b \to\infty$ studied in the theorem, it could be possible to obtain a constant limiting overshoot $\lim_{b \to \infty}\varkappa(b)$ with weaker assumptions using the following reasoning. 
The test-statistic increments can be written as
\begin{equation}\label{eq:increment_decom}
    \lhat_n = \log\frac{\phat_n(X_n)}{q(X_n)}=\log\frac{p(X_n)}{q(X_n)}-\log \frac{p(X_n)}{\phat_n(X_n)} =: Z_n + Y^{(b)}_n,
\end{equation}
where $Z_n$ are i.i.d. random variables corresponding to the true likelihood ratios with mean $\KL{p}{q}$ after the change, and $Y^{(b)}_n$ represents a perturbation due to estimation error of $p$. In $Y_n^{(b)}$, we explicitly denote that the perturbation depends on the threshold $b$, since the design parameters $\calW$ and $\alpha = 1/b$ used by the PM-CuSum algorithm are chosen as a function of $b$. Suppose for simplicity that the change-point $\nu = 1$. Write 
\begin{equation}\label{eq:cumul_error_xi}
    \xi_n^{(b)} = \sum_{j=1}^n Y^{(b)}_j =\sum_{j=1}^n\log \frac{p(X_j)}{\sum_{w\in\calW}\pi_j^{(w)}\phat_j^{(w)}(X_j)}
\end{equation}
as the cumulative perturbation up to time $n$. Then, for large values of $n$, the proposed test statistic is well approximated by a random walk minus a perturbation term
\begin{equation}
    S_n \approx \sum_{j=1}^n \lhat_j=\sum_{j=1}^nZ_n - \xi^{(b)}_n,
\end{equation}
since $S_n$ will leave $0$ with probability one, after which the max-operation in the usual update of $S_n$ does not play a role. Intuitively, for large $n$ and $b$, the PM-CuSum algorithm is able to utilize long estimation windows such that $\phat_n$ well approximates $p$, and hence $\xi_n^{(b)}$ should grow slowly for large $n$. Indeed, most of the effort in the proof of Theorem~\ref{thm:delay_thm} is showing that that $\xi_n^{(b)}$ grows slow enough to not affect the detection delay too much. In fact, from the intermediate steps in the proof it can be deduced that for $n = O(b)$ 
\begin{equation}\label{eq:log_growth_xi}
    \Ex_1(\xi_n^{(b)}) = O((\log b)^2), \quad b  \to\infty.
\end{equation}
The effect of the perturbation $\xi_n^{(b)}$ becomes negligible compared to the random walk for large $n$ and $b$. It is therefore intuitive that the average overshoot of $S_{T_b}$ over a large threshold should be close to the average overshoot of just the random walk, which can be bounded by a constant assuming only a finite second moment for $Z_n$ using Lorden's inequality \cite{Lorden1970OnBoundary}. This is indeed the case if $\xi_n^{(b)}$ satisfies the conditions of a \emph{slowly changing} sequence \cite[pp. 50]{TARTAKOVSKY_BOOK}, \cite[pp. 190]{siegmund1985sequential}. The two conditions are
\begin{equation}\label{eq:sv1}
    \frac{1}{n}\max_{1\leq k\leq n}|\xi_k^{(b)}| \longrightarrow 0 \quad \text{in } P\text{-probability},
\end{equation}
and that for every $\epsilon > 0$ there are $n^*$ and $\delta > 0$ such that
\begin{equation}\label{eq:sv2}
    P\left(\max_{1\leq k \leq n\delta}|\xi^{(b)}_{n+k}-\xi^{(b)}_n| > \epsilon\right) < \epsilon \quad \text{for all } n \geq n^*.
\end{equation}
For the purposes of this paper, the parameter $b$ must also grow to infinity at a sufficient rate in \eqref{eq:sv1} and \eqref{eq:sv2}. 

Condition \eqref{eq:sv1} could be confirmed from \eqref{eq:log_growth_xi} with some second moment assumptions, but unfortunately we have been unable to rigorously prove \eqref{eq:sv2} for the considered $\xi^{(b)}_n$. Condition \eqref{eq:sv2} states that eventually the perturbation sequence $\xi_n^{(b)}$ has jumps with small probability. While there exist bounds for the regret of the Fixed Share algorithm in any interval, such the one used in \eqref{eq:fs_adaptive_loss}, these bounds hold for \emph{any} (even adversarial) data $X_n$, and are therefore too conservative for proving \eqref{eq:sv2}. In the considered setup, the data is i.i.d. and it is intuitive that the FS algorithm will perform much better in the limit $n\to\infty$ than the worst-case bounds suggest. However, obtaining precise estimates for the asymptotic behavior of $\xi_n^{(b)}$ in \eqref{eq:cumul_error_xi} is tedious due to the dependencies caused by the FS weight update $\pi_n^{(w)}$, and the overlapping sliding windows of different lengths. 

\section{Details of proposed sparse Gaussian predictor}\label{appendix:gauss_shift_predictives}
In this section, we provide omitted details of the proposed sparse Gaussian predictor used in Section~\ref{sec:gaus_shift}. The derivations make use of some ideas presented in \cite{johnstone2005ebayesthresh}, but we present all steps for completeness. Moreover, \cite{johnstone2005ebayesthresh} considers only estimation of $\btheta$, while our focus is on prediction of the next observation.
\subsection*{Closed-form evaluation of the Laplace-slab predictive density}
In this section, we derive the Laplace-slab predictive term $u(x|z)$ \eqref{eq:u_def} used in Section~\ref{sec:gaus_shift}:
\begin{align}\label{eq:u_appendix}
u(x\mid z) &= p(X_n^{(j)}=x|\bar X^{(j)}=z, A^{(j)}=1)= \int \calN(x;\vartheta,\sigma^2)\frac{\calN(z;\vartheta,\sigma^2/w)g_\lambda(\vartheta)}
{m_{1}(z)}d\vartheta \\
&= \frac{1}{m_1(z)} \underbrace{\int \calN(x;\vartheta,\sigma^2)\calN(z;\vartheta,\sigma^2/w)g_\lambda(\vartheta)d\vartheta}_I\label{eq:u_integral}
\end{align}
where $m_1(z)$ is given from \eqref{eq:m_dense} as
\begin{equation}\label{eq:m1_appendix}
m_{1}(z) =p(\bar X^{(j)} = z|A^{(j)}=1)=\int \calN(z;\vartheta,\sigma^2/w)g_\lambda(\vartheta)\,d\vartheta 
\end{equation}
and $g_\lambda$ is the Laplace density \eqref{eq:lapl_dist}.

Using standard Gaussian identities, see e.g. \cite[Sec. 8.1.8]{petersen2008matrix}, it holds that
\begin{equation}
    \calN\left(x; \vartheta, \sigma^2\right)\calN\left(z;\vartheta, \sigma^2/w\right) = \calN\left(x-z; 0, \sigma^2+\frac{\sigma^2}{w}\right)\calN\left(\vartheta;\mu, \tau^2\right),
\end{equation}
where $\mu = (x + wz)/(1+w)$, $\tau^2 = \sigma^2/(1+w)$. Therefore, we obtain for the integral in \eqref{eq:u_integral}
\begin{align}
   I &= \calN\left(x-z; 0, \sigma^2+\frac{\sigma^2}{w}\right) \int \calN\left(\vartheta;\mu, \tau^2\right)g_\lambda(\vartheta)d\vartheta \\
   &= \calN\left(x-z; 0, \sigma^2+\frac{\sigma^2}{w}\right) \int \calN\left(\mu-\vartheta;0, \tau^2\right)g_\lambda(\vartheta)d\vartheta,
\end{align}
where we notice that the integral is a convolution between a normal distribution and a Laplace distribution. That is, if $Y \sim \calN(0, \tau^2)$ and $\vartheta \sim \text{Laplace}(\lambda)$ are independent, then the pdf of $Y+\vartheta$ is given by
\begin{equation}\label{eq:h_def}
    h(y; \tau^2, \lambda) := \int \calN(y-\vartheta;0, \tau^2)\frac{\lambda}{2}\exp(-\lambda |\vartheta|)d\vartheta,
\end{equation}
and then 
\begin{equation}
    I = \calN\left(x-z; 0, \sigma^2+\frac{\sigma^2}{w}\right)h(\mu;\tau^2,\lambda).
\end{equation}
The convolution can be computed in closed form, and is implemented in e.g. the R package \texttt{NormalLaplace} \cite{normallaplace_R}. 
For the normalizing term in \eqref{eq:u_integral}, we have from \eqref{eq:m1_appendix}
\begin{equation}\label{eq:m1_closed}
    m_1(z) = \int \calN(z;\vartheta,\sigma^2/w)g_\lambda(\vartheta)\,d\vartheta = h(z;\sigma^2/w,\lambda).
\end{equation}
Combining the expressions for $I$ and $m_1$, we obtain
\begin{equation}
    u(X_n^{(j)} | \bar{X}^{(j)}) = \phi\left(X_n^{(j)}-\bar{X}^{(j)}; 0, \sigma^2+\frac{\sigma^2}{w}\right)\frac{h(\mu; \tau^2, \lambda)}{h(\bar{X}^{(j)}; \sigma^2/w, \lambda)},
\end{equation}
where 
\begin{equation}
    \mu = \frac{w\bar{X}^{(j)} + X_n^{(j)}}{w+1}, \quad \tau^2 = \frac{\sigma^2}{w+1}.
\end{equation}

\subsection*{Posterior probability of Laplace-component}

Next, we derive a closed-form expression for the posterior probability that coordinate $j$ is affected by the change given the observed data $\bar X^{(j)}$, that is
\begin{equation}
    \rho(z)=\mathbb P(A^{(j)}=1\mid \bar X^{(j)}=z)
=
\frac{\eta m_{1}(z)}
{(1-\eta)m_{0}(z)+\eta m_{1}(z)},
\end{equation}
as derived in \eqref{eq:post_prob}. Using \eqref{eq:m1_closed}, we get
\begin{equation}
    \rho(z) = \frac{\eta \, h(z;\sigma^2/w, \lambda)}{(1-\eta)\calN(z;0,\sigma^2/w) +\eta \, h(z;\sigma^2/w, \lambda)}.
\end{equation}
Writing 
\begin{equation}\label{eq:kappa_def}
    \kappa(z) = \frac{h(z; \sigma^2/w, \lambda)}{\calN(z;0,\sigma^2/w)} -1,
\end{equation}
the posterior probability $\rho(z)$ simplifies to
\begin{equation}
    \rho(z) = \frac{1 + \kappa(z)}{1/\eta + \kappa(z)}.
\end{equation}

\subsection*{Solving for $\eta$}

Finally, using the expression for the marginal likelihood $m_1(z)$ derived in \eqref{eq:m1_closed}, the maximum marginal likelihood estimate for $\eta$ is given by
\begin{align}
    \hat\eta &= \underset{\eta}{\arg \max} \prod_{j=1}^k\left[(1-\eta)m_0(\bar X^{(j)}) + \eta \, m_1(\bar X^{(j)})\right] \\
    &= \underset{\eta}{\arg \max} \underbrace{\sum_{j=1}^k \log \left[ (1-\eta)\calN(\bar X^{(j)}; 0, \sigma^2/w) + \eta \, h(\bar X^{(j)}; \sigma^2/w, \lambda) \right]}_{l(\eta)} \\
    &= \underset{\eta}{\arg \max} \, l(\eta).
\end{align} 
The derivative of the log-likelihood is given by
\begin{align}
    l'(\eta) &= \sum_{j=1}^k \frac{h(\bar X^{(j)}; \sigma^2/w, \lambda) - \calN(\bar{X}^{(j)}; 0, \sigma^2/w)}{\eta \, h(\bar X^{(j)}; \sigma^2/w, \lambda) + (1-\eta)\calN(\bar{X}^{(j)}; 0, \sigma^2/w)} \\
    &= \sum_{j=1}^{k} \frac{\kappa(\bar X^{(j)})}{1+\eta \kappa(\bar X^{(j)})},
\end{align}
where $\kappa(\bar X^{(j)})$ was defined in \eqref{eq:kappa_def}.
Since $l'(\eta)$ is decreasing in $\eta$ on $[0,1]$, the maximum marginal likelihood estimate is attained at the root of $l'(\eta)$ if it exists, and at a boundary otherwise. The root can be found using binary search, for example.

\end{document}